\newtheorem{theorem}{Theorem}[section]
\newtheorem{lemma}[theorem]{Lemma}
\theoremstyle{definition}
\theoremstyle{remark}
\newtheorem{remark}[theorem]{Remark}
\numberwithin{equation}{section}
\begin{document}
\setcounter{page}{1}

\title[Finite rank perturbations]{Finite rank perturbation of non-Hermitian random matrices: heavy tail and sparse regimes}

\author[Y.H]{Yi Han}
\address{
  Yi Han:
  \endgraf
  Department of Mathematics, Massachusetts Institute of Technology, Cambridge MA
  \endgraf
  {\it E-mail address} {\rm hanyi16@mit.edu}
  }

\thanks{The author was supported by an EPSRC grant EP/W524141/1 when part of the work was carried out. The author was then supported by a Simons Foundation Grant (601948, DJ) when the rest of the work was completed.
}

\begin{abstract} In this work we investigate spectral properties of squared random matrices with independent entries that have only two finite moments. We revisit the problem of perturbing a large, i.i.d. random matrix by a finite rank error. We prove that under a merely second moment condition, for a large class of perturbation matrix with bounded rank and bounded operator norm, the outlier eigenvalues of perturbed matrix still converge to that of the perturbation, which was previously known when matrix entries have finite fourth moment. We then show that the same perturbation holds for very sparse random matrices with i.i.d. entries, all the way up to a constant number of nonzero entries per row and column.
\end{abstract} 

\maketitle

\allowdisplaybreaks

\section{Introduction}

Consider $A_n:=\{a_{ij}\}_{1\leq i,j\leq n}$ an $n$ by $n$ random matrix with i.i.d. elements $a_{ij}$ having mean zero and finite second moment: \begin{equation}\label{firstcondition}\mathbb{E}[a_{ij}]=0 \text{ , and } \mathbb{E}[|a_{ij}|^2]=1.\end{equation} When $a_{ij}$'s have complex Gaussian distribution, then $A_n$ belongs to the complex Ginibre ensemble \cite{ginibre1965statistical}. For $A_n$ with i.i.d. entries with mean $0$, variance one and otherwise arbitrary distribution, it is known (\cite{ WOS:000281425000010}, culminating many previous works, see references therein) that the empirical spectral density of $n^{-1/2}A_n$ converges to the circular law, i.e. the uniform distribution in the unit disk in the complex plane.

Determining the spectral radius $\rho(A_n)$, and further determining the outliers of $A_n$ under a bounded rank noise, is a very different task.
 When entries $a_{ij}$ have finite fourth moments, applying a high trace method to $n^{-1/2}A_n$ and using a standard truncation procedure (see \cite{Bai1986LimitingBO} \cite{Geman1986THESR}) proves that $\rho(A_n)$
converges to 1 almost surely. When $a_{ij}$ has a finite second moment but infinite fourth moment, then the operator norm of $n^{-1/2}A_n$ tends to infinity, but it was later uncovered that under merely assumption \ref{firstcondition}, $\rho(n^{-1/2}A_n)$ still converges to 1 as $n$ tends to infinity, i.e. no outlier exists for the circular law. This no-outlier result was first proven via a high trace moment method in \cite{ WOS:000435416700013} under additional conditions (assuming finite $2+\eta$-moments and a symmetric entry distribution), and then proven in full generality via a characteristic functions method proposed in \cite{bordenave2021convergence}. 

Another model of interest concerns adjacency matrices of sparse Erdős–Rényi digraphs, that is, we consider $A_n=\{a_{ij}\}_{1\leq i,j\leq n}$ with each $a_{ij}$ i.i.d. having distribution $\operatorname{Ber}(\frac{d_n}{n}),d_n>1$. For these sparse matrices, we know that (\cite{benaych2020spectral},\cite{coste2023sparse}) there is only one outlier eigenvalue of $(d_n)^{-1/2}A_n$ (tending to infinity when $d_n\to\infty$ or converging to $\sqrt{d}$ for $d_n=d$), and all other eigenvalues have modulus $1+o(1)$. The circular law for $(d_n)^{-1/2}A_n$ in the regime $d_n\to\infty$ was recently proved in full generality in \cite{rudelson2019sparse}, and the existence of limiting spectral density in the case of constant $d$ was recently resolved in \cite{sah2023limiting}.

A closely related stream of research concerns finite rank perturbations of random matrices, that is, what are the outlier eigenvalues of $n^{-1/2}A_n+C_n$, where $C_n$ is a matrix with finite rank and bounded operator norm? The corresponding problem for Hermitian random matrices has been studied in more detail in \cite{peche2006largest}, \cite{capitaine2012central}, \cite{pizzo2013finite}, \cite{feral2007largest}, \cite{benaych2011eigenvalues} (see also references therein): there is a phase transition called the BBP transform: for a Wigner matrix $W_n$, a unit vector $v$ and $\theta>0$, the largest eigenvalues of $n^{-1/2}W_n+\theta vv^t$ converges to $\theta+\frac{1}{\theta}$ when $\theta>1$ and converges to 2 when $\theta<1$. In the non-Hermitian case of $A_n$ with i.i.d. elements, the outlier eigenvalues of $n^{-1/2}A_n+C_n$ behave rather differently: 
it is first proved in Tao \cite{tao2013outliers} that in this i.i.d. case, under a fourth moment assumption on matrix entries, the outlier eigenvalues of $n^{-1/2}A_n+C_n$ converge towards the eigenvalues of $C_n$ with modulus larger than one, so that the mapping $\theta\to \theta+\frac{1}{\theta}$ in the Wigner case is replaced by the identity mapping in this i.i.d. case when $|\theta|>1$.  Fluctuations around the limiting value were then studied in \cite{bordenave2016outlier}, and see also \cite{benaych2016outliers} for outliers in a related case: the single ring theorem. Meanwhile, finite rank perturbations for elliptic random matrices were studied in \cite{article1221} also under a finite fourth moment condition, and the result interpolates between the i.i.d. case and the Wigner case, giving a mapping $\theta\to \theta+\frac{\rho}{\theta}$ for $|\theta|>1$, where $\rho\in(-1,1)$ is the covariance profile of the matrix, i.e. $\mathbb{E}[a_{ij}a_{ji}]=\rho$.

The first objective of this paper is to show that the celebrated result $\rho(n^{-1/2}A_n)\to 1$ under a finite second moment condition in \cite{bordenave2021convergence} holds more generally for product matrices, that is, we have $\rho(\prod_{i=1}^m(n^{-1/2}A^i))\to 1$ for any fixed $m$ under finite second moment condition, where $A^1,\cdots,A^m$ are i.i.d. copies of $A_n$.

The second objective of this paper is to investigate the effect of infinite fourth moment and sparsity on the outlier eigenvalues of $n^{-1/2}A_n+C_n$. Technically, a lot of complications arise in this setting: the operator norm of $n^{-1/2}A_n$ is unbounded, so that arguments based on operator norms, as in \cite{bordenave2016outlier}, do not apply. We also cannot use the replacement principle to compare $A_n$ to a matrix with Gaussian entries as in \cite{tao2013outliers} as the replacement principle typically relies on a fourth moment assumption. Likewise,we find it difficult in the computations to use singular value decomposition and simplify $C_n$ into a canonical form, because the operator norm of $n^{-1/2}A_n$ is not bounded. We have to turn back to first principles. Inspired by \cite{bordenave2021convergence}, we use the characteristic function method to find outlier eigenvalues. 

The main results of this paper are listed as follows. We use the notation $H(\mathbb{D})$ to denote the space of holomorphic functions on the unit disk $\mathbb{D}$, equipped with the compact open topology, i.e. the topology of uniform convergence on compact subsets. A sequence $c_n\in H(\mathbb{D})$ is said to form a tight sequence if for any $\epsilon>0$ there is a compact subset $K_\epsilon\subset H(\mathbb{D})$ such that $\mathbb{P}(c_n\in K_\epsilon)\geq 1-\epsilon$ for each $n$.

\subsection{Spectral radius of product heavy-tailed i.i.d. matrices}

The first result of this paper proves that the spectral radius of a product of a fixed number of i.i.d. random matrices with only second moment still converges to 1 in probability. This generalizes the main result of \cite{bordenave2021convergence} to product matrices.

\begin{theorem}\label{spectralradiusproduct}
 Fix an integer $m\in\mathbb{N}_+$. Consider $A^1,\cdots,A^m$ $m$ independent $n\times n$ random matrices with entry $(A^k)_{ij}=a^k_{ij}$, where $(a^k_{i,j})_{1\leq i,j\leq n,k=1,\cdots,m}$ are independent identically distributed random variables satisfying 
    \begin{equation}
        \mathbb{E}[a_{ij}^k]=0,\text{and } 
  \mathbb{E}[|a_{ij}^k|^2]=1.   \end{equation}
  Then for any $\epsilon>0$, with probability $1-o(1)$, all eigenvalues of the product matrix $$n^{-m/2}A^1\cdots A^m$$ lie in $(1+\epsilon)\mathbb{D}$, i.e. 
  $$
\rho(n^{-m/2}A^1\cdots A^m)\leq 1+\epsilon,
  $$ where $\rho(A)$ denotes the spectral radius of a square matrix $A$.
\end{theorem}

The product of i.i.d. matrices has been studied by several authors before. For convergence of the empirical spectral distribution, \cite{MR2861673} studied the case when entries have finite $(2+\eta)$-moment and \cite{o2015products} studied the elliptic case. It is believed that in these works the $(2+\eta)$-moment condition can be weakened to only a second moment condition. For the spectral radius and perturbations under a finite rank noise, \cite{MR4076784} provided a very thorough study assuming that entries have a finite fourth moment. In this work we complement the picture by showing that the spectral radius for product i.i.d. matrices (with only unit second moment) also converges to one. 

The proof of Theorem \ref{spectralradiusproduct} relies on a linearization procedure to transfer the problem to a block matrix $\mathcal{Y}$ with independent entries. We then analyze the matrix $\mathcal{Y}$ by showing that its (rescaled) characteristic function converges to some Gaussian analytic function, which is an adaptation of the technique in \cite{bordenave2021convergence} to block type matrices.

After the paper was submitted, in a slightly more recent paper \cite{han2025spectral}, Theorem 1.11, the author used a different method to prove a generalized version of Theorem \ref{spectralradiusproduct} but assuming a stronger condition, namely $2+\epsilon$-finite moments. Another recent paper \cite{hachem2025spectral} also discusses the same problem. We still keep the proof of Theorem \ref{spectralradiusproduct} here to illustrate the proof technique, and we will later explore the outliers for the product of i.i.d. matrices in Theorem \ref{productoftheorem1}, where this computation in Theorem \ref{spectralradiusproduct} will be used.

A very interesting problem is to consider product elliptic random matrices (including product Wigner matrices) with only finite second moment assumption. The techniques of this paper (and also \cite{bordenave2021convergence}) are not directly applicable: a key technical step is to show the characteristic function forms a tight sequence, whose proof (see Lemma \ref{tightnesslemma}) generally requires all entries of $\mathcal{Y}$ are independent. The independence is guaranteed in the i.i.d. case we consider here, but is not granted in the elliptic case. When entries have finite $4+\epsilon$ moments, the spectral outliers can be determined by much more classical methods from free probability, see the companion paper \cite{han2024outliers} and many references therein.

\subsection{Finite rank perturbation: finite second moment}
The second series of results in this paper is to prove finite rank perturbation results for random matrices with only two finite moment. They are given as follows: 

\begin{theorem}\label{theorem1} Assume that $C_n=(C_{ij})_{1\leq i,j\leq n}$ is a deterministic $n\times n$ square matrix that belongs to one of the following two classes:
\begin{enumerate}
    \item $C_n$ is a diagonal matrix of finite rank, or $C_n$ is in a Jordan normal form with finite rank, or more generally $C_n$ has only $M_1$ non-zero entries. In all these cases, assume moreover that all the elements of $C_n$ are bounded in norm by $M_2$, and constants $M_1,M_2$ are independent of $n$,
    \item $C_{ij}=\frac{\mu}{n}$ for each $i,j$, or more generally $|C_{ij}|\leq \frac{M}{n}$ for some $M>0$ and each $i,j$, where $M$ is independent of $n$.
\end{enumerate}
Assume also that the following condition holds in each previous case: denote by $c_n(z)$ the reverse characteristic function of $C_n$: $c_n(z)=\det(\operatorname{I}-zC_n),$ then $c_n(z)$ forms a tight family in $\operatorname{H}(\mathbb{D})$, and there exists some non-zero function $c(z)\in \operatorname{H}(\mathbb{D})$ such that $c_n(z)$ converges to $c(z)$ in $\operatorname{H}(\mathbb{D})$ as $n$ tends to infinity.

Assume also that $A_n=(a_{ij})_{1\leq i,j\leq n}$ is a square matrix of size $n$, with independent, identically distributed entries $a_{ij}$ satisfying \begin{equation}\label{firstcondition12}\mathbb{E}[a_{ij}]=0 \text{ ,and } \mathbb{E}[|a_{ij}|^2]=1.\end{equation}
(In particular, $a_{ij}$ may not have a finite fourth moment.)

    Then almost surely, as $n$ tends to infinity, the eigenvalues of $n^{-1/2}A_n+C_n$ with modulus greater than $1+o(1)$ converge (after relabeling) to the solutions $z$ of $c(z^{-1})=0$ with $|z|>1$, and every solution to $c(z^{-1})=0$ with $|z|>1$ arises as the limit of some eigenvalues of $n^{-1/2}A_n+C_n$.

    Along the proof we will also prove the following convergence of characteristic functions: denote by $q_n(z)=\det(\operatorname{I}-z(n^{-\frac{1}{2}}A_n+C_n))$, then $q_n(z)$ converges in distribution to $q(z)$ as functions in $\operatorname{H}(\mathbb{D})$, where 
    \begin{equation}
        q(z)=\kappa(z)c(z)e^{-F(z)},\quad z\in\mathbb{D},
    \end{equation} 
    where $\kappa(z)=\sqrt{1-\mathbb{E}[a_{11}^2]z^2}$, $F(z)=\sum_{k=1}^\infty X_k\frac{z^k}{k} 
$ where $\{X_k\}_{k\geq 1}$ are independent complex Gaussian variables with 
$$
\mathbb{E}[X_k]=0,\quad \mathbb{E}[|X_k|^2]=1,\quad \mathbb{E}[X_k^2]=\mathbb{E}[a_{11}^2]^k.
$$
\end{theorem}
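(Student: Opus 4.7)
My plan is to establish convergence in distribution of the reverse characteristic polynomial $q_n(z)=\det(I-zM_n)$, where $M_n:=n^{-1/2}A_n+C_n$, to the announced limit $q(z)=\kappa(z)c(z)e^{-F(z)}$ as random elements of $H(\mathbb{D})$; the outlier eigenvalue convergence then follows by Hurwitz's theorem. Indeed, nonzero eigenvalues $\lambda$ of $M_n$ with $|\lambda|>1$ correspond to zeros $z=1/\lambda\in\mathbb{D}$ of $q_n$; the factor $\kappa$ has no zeros inside $\mathbb{D}$ (since $|\mathbb{E}[a_{11}^2]|\le 1$ by Cauchy--Schwarz), and $e^{-F(z)}$ is zero-free, so the zeros of $q$ in $\mathbb{D}$ are exactly the zeros of $c$ there, giving the stated correspondence. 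Nontriviality of $q$ is clear since $q(0)=\kappa(0)c(0)e^{-F(0)}=1$.

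I would prove the convergence in $H(\mathbb{D})$ by combining finite-dimensional distributional convergence of the Taylor coefficients with tightness. On a small disk about $0$ one may write $\log q_n(z)=-\sum_{k\ge 1}\frac{z^k}{k}\operatorname{tr}(M_n^k)$, and expanding each trace as a sum over words of length $k$ in $\{n^{-1/2}A_n,C_n\}$ produces three families. Pure-$A_n$ words combine into $-\sum_k\frac{z^k}{k}n^{-k/2}\operatorname{tr}(A_n^k)$, which converges jointly in $k$ to $\log\kappa(z)-F(z)$ by the main result of \cite{bordenave2021convergence}. Pure-$C_n$ words combine into $-\sum_k\frac{z^k}{k}\operatorname{tr}(C_n^k)=\log c_n(z)$, which converges to $\log c(z)$ by hypothesis. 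Everything else is a mixed word and must be shown to vanish in probability, coefficient by coefficient.

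For the mixed words in Case (1), the key point is that if $C_n$ has only $M_1$ nonzero entries then $C_n^\ell$ has at most $M_1^\ell$ nonzero entries. Writing $C_n=UV^*$ via its rank factorization ($U,V\in\mathbb{C}^{n\times r}$ with $r\le M_1$ and bounded entries), any mixed trace rearranges cyclically into $n^{-j/2}\operatorname{tr}(B_1\cdots B_p)$ with $B_i=V^*A_n^{j_i}U\in\mathbb{C}^{r\times r}$ and $j=\sum j_i\ge 1$. A second-moment computation, in the spirit of \cite{bordenave2021convergence}, then shows that the independence and mean-zero property of the entries of $A_n$, combined with the $n^{-j/2}$ normalization, forces the mixed contribution to vanish in $L^2$; intuitively, a mixed word loses at least one free summation index relative to the variance it can accumulate. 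Case (2) is handled analogously, exploiting instead the $O(1/n)$ smallness of $|C_{ij}|$ to produce the required decay. Tightness of $\{q_n\}$ in $H(\mathbb{D})$ is then derived from uniform $L^2$-bounds $\sup_n\mathbb{E}|q_n(z)|^2<\infty$ on compact subsets of $\mathbb{D}$, adapting the bounds of \cite{bordenave2021convergence} and absorbing the $C_n$ contribution by the hypothesized tightness of $c_n$; Montel's theorem yields tightness in $H(\mathbb{D})$, and combined with finite-dimensional convergence this gives $q_n\Rightarrow q$, after which Hurwitz closes the proof (including surjectivity: every zero of $c$ in $\mathbb{D}$ attracts the correct number of zeros of $q_n$).

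The principal obstacle is the absence of any fourth moment on the entries of $A_n$: $\|n^{-1/2}A_n\|$ is unbounded, so one cannot expand $(I-zM_n)^{-1}$ as a Neumann series in operator norm, nor truncate to reduce to the Gaussian case via the replacement principle of \cite{tao2013outliers}, nor use any spectral bound on $A_n^j$ to control mixed traces. The entire argument must therefore be carried out at the level of determinants and formal power series coefficients, and the crux is to quantify how the sparsity (Case 1) or $1/n$ smallness (Case 2) of $C_n$ provides exactly enough cancellation to annihilate the mixed words in spite of the heavy tails of $A_n$.
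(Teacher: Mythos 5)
Your strategy coincides with the paper's: both prove convergence of $q_n$ in $H(\mathbb{D})$ by combining tightness with finite-dimensional convergence of the Taylor coefficients, reduce the latter to convergence of $n^{-k/2}\operatorname{Tr}(M_n^k)$ via the $\log\det$ identity, split each trace into pure-$A_n$ words (handled by \cite{bordenave2021convergence}), pure-$C_n$ words (giving $c(z)$), and mixed words killed by first/second moment counting, and finish with Rouch\'e/Hurwitz. Your heuristic for why mixed words die --- ``a mixed word loses at least one free summation index'' --- is exactly the paper's mechanism: in Case (1) every vertex incident to a $C$-edge is drawn from the $O(1)$ support of $C_n$, so a configuration with $Q$ doubled $A$-edges has only $Q-1$ free vertices instead of $Q$; in Case (2) the loss comes from the extra $1/\sqrt{n}$ per $C$-edge. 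Your rank factorization $C_n=UV^*$ is a harmless repackaging of the same count (the paper deliberately avoids SVD only in the sense of absorbing unitaries into $A_n$, which it cannot do without a fourth moment; reorganizing the deterministic matrix alone is fine).

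The one genuine gap is the missing truncation step. Under only $\mathbb{E}|a_{11}|^2<\infty$, the second-moment computations you invoke are not valid as written: when expanding $\mathbb{E}|T|^2$ over pairs of words, configurations in which some edge is traversed three or more times contribute factors $\mathbb{E}[|a_{ij}|^p]$ with $p\ge 3$, which may be infinite, so one cannot even assert that these terms are ``negligible'' --- they need not be finite. The paper first replaces $a_{ij}$ by $a_{ij}^M=a_{ij}1_{|a_{ij}|\le M}-\mathbb{E}[a_{ij}1_{|a_{ij}|\le M}]$ and proves a two-parameter limit interchange (its Lemma \ref{greatlemma2.34}, extending Lemma 3.3 of \cite{bordenave2021convergence}): if the truncated coefficients $P_k^{n,M}$ converge as $n\to\infty$ to limits $Y_k^M$ which in turn converge as $M\to\infty$, then the untruncated $P_k^n$ converge to the same limits. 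Only after this reduction are the entries almost surely bounded, so that every moment appearing in the graph counting is finite and the ``traversed more than twice'' configurations are genuinely $O(n^{-1})$. Your sketch must insert this reduction before any of the word counting begins; with it, the rest of your outline matches the paper's proof.
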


As a special case, when the finitely many eigenvalues of $C_n$ are $n$-independent, then our assumption that $c_n(z)$ converges to $c(z)$ is always guaranteed.

The proof of both Theorem \ref{spectralradiusproduct} and \ref{theorem1} are based on a trace moment method proposed in \cite{bordenave2021convergence} that yields the convergence of spectral radius via characteristic polynomials. An interesting technical step in the proof, which we would like to highlight in the introduction, is the following. Let $A_n$ be a random matrix we care about (which may not be the same $A_n$ in Theorem \ref{spectralradiusproduct} and \ref{theorem1})). We define $q_n(z)=\det(1-n^{-1/2}zA_n)$ the reverse characteristic polynomial. We can expand $q_n(z)=1+\sum_{k=1}^\infty(-z)^kP_k^n$ for some random coefficients $P_k^n$. At least when $z$ is sufficiently small, we have that the following defined power sequence 
$$B_n:=-\sum_{k=1}^\infty (\frac{A_n}{\sqrt{n}})^k\frac{z^k}{k}$$
is well-defined and converges to the logarithm of $1-z\frac{A_n}{\sqrt{n}}.$ From the matrix relation $\det(e^{B_n})=e^{\operatorname{Tr}B_n}$, this leads to, for $z$ sufficiently small, 
$$
q_n(z)=\exp\left(-\sum_{k=1}^\infty \frac{\operatorname{Tr}((A_n)^k)}{n^{k/2}}\frac{z^k}{k}
\right).
$$ Since two analytic functions that are equal in a small neighborhood of complex plane must be equal identically, this leads to the following consequence:

For each $k\in\mathbb{N}_+$, the coefficients $(P_1^n,\cdots,P_k^n)$ can be represented as ($n$-independent) polynomial functions of $$\left(\frac{\operatorname{Tr}(A_n)}{\sqrt{n}},\cdots,\frac{\operatorname{Tr}((A_n)^k)}{n^{k/2}}\right)$$ (and in reverse). Thanks to this expression, all the further effort of proving convergence of $q_n(z)$ boils down to proving the convergence of $\operatorname{Tr}(A_n)^k/n^{k/2}$ as $n$ tends to infinity. Our proof in each specific case is based on an explicit computation for the distributional limit of $\operatorname{Tr}((A_n)^k)$ where we replace the i.i.d. matrix $A_n$ by its perturbed version $A_n+\sqrt{n}C_n$.

The main technique of this paper is to carry out a direct computation of the characteristic function $\det(1-z(n^{-1/2}A_n+C_n))$. This is in contrast to the more standard approach in determining the outliers of perturbed random matrices, as proposed in \cite{tao2013outliers}, where we factorize the characteristic polynomial as 
$$
\det(z-(n^{-1/2}A_n+C_n))=\det(z-n^{-1/2}A_N)\det(I-(z-n^{-1/2}A_n)^{-1}C_n).
$$ Then we find a spectral decomposition of $C_n=U_n\Lambda_n V_n$ to reduce the second determinant to a finite dimensional determinant 
$$
det(I-(z-n^{-1/2}A_n)^{-1}C_n)=\det(I-\Lambda_nV_n(z-n^{-1/2}A_n)^{-1}U_n).
$$ While being very convenient to use, this standard method does not seem to work well in this paper as we do not know how to compute the resolvent $(z-n^{-1/2}A_n)^{-1}$ in the large $n$ limit. When the entries of $A_n$ have only two finite moments, it is not even known if the resolvent will converge to a deterministic limit. On the other hand, a direct computation of the characteristic function $\det(z-(n^{-1/2}A_n+C_n))$ can better capture the cancellation of the i.i.d. entries, but the proof becomes more involved and technically relies on the specific form of the perturbation $C_n$. The spectral decomposition $C_n=U_n\Lambda_n V_n$ cannot be used when we implement this method of direct computation.

Our assumptions on $C_n$ are a bit technical: in practice, one should be able to check that the result of Theorem \ref{theorem1} is true for any $C_n$ with bounded rank and bounded operator norm. The combinatorics involved in the computation for general $C_n$ is however very complicated, as we cannot use directly the singular value decomposition for $C_n$ (if $n^{-\frac{1}{2}}A_n$ has bounded operator norm, then SVD can be effectively used as we can ignore the unitary matrices in SVD and reduce $C_n$ to a normal form, but this is not the case here with only two finite moments). Under a finite fourth moment condition, this theorem (in more general form) was proved in \cite{tao2013outliers}, see also \cite{bordenave2016outlier}.

We also note that the constants $M_1$,$M_2$ and $M$ in the statement of Theorem \ref{theorem1} can be slowly growing in $n$ at rate $O(n^{o(1)})$ with exactly the same proof.

The main achievement of this result compared to previous works is that we do not assume entries $a_{ij}$ have a finite fourth moment, and under merely finite second moment, we have the same finite-rank perturbation result. Note that our proof does not yield the fluctuations of outlier eigenvalues around their limiting value, which is studied in \cite{bordenave2016outlier} for some special cases of $C_n$ under finite fourth moment assumption. Also, for (symmetric) Wigner matrices with entries having less than four moments, its finite rank perturbation result is recently obtained in \cite{diaconu2022finite} and is completely different from the non-Hermitian case here.

For symmetric (or Hermitian) random matrices, Van Handel and Brailovskaya \cite{brailovskaya2022universality} recently proposed a general framework, further developing the ideas in \cite{bandeira2023matrix}, that can prove finite rank perturbation results for random matrices only assuming very few moments. The main motivating example in \cite{brailovskaya2022universality} can be stated as follows: consider $W_n=(W_{ij})$ a real symmetric random matrix with $(W_{ij})_{1\leq i\leq j\leq n}$ independent, that $W_n$ has mean $0$ and has the same variance profile as a GOE matrix. Then assuming moreover that $n^{-\frac{1}{2}}|W_{ij}|\leq (\log n)^{-2}$ a.s. for each $i,j$, the finite rank perturbation properties of $W_n$ are asymptotically the same as a GOE matrix: we still have the phase change at $\theta=1$ and the mapping $\theta\to\theta+\frac{1}{\theta}$ when $|\theta|>1$, (the fluctuation around the spiked eigenvalues are different though). This result is remarkable in that only a second moment condition is assumed, but the condition $n^{-\frac{1}{2}}|W_{ij}|\leq (\log n)^{-2}$ is necessary: without it, the top eigenvalues are Poisson distributed, see \cite{diaconu2022finite}. The main result of this paper also assumes only a finite second moment, yet we do not impose any truncation condition and still get the same finite rank perturbation as the subGaussian case: this is because the i.i.d. nature of our matrix leads to more cancellations than Wigner matrices. For this reason, we reveal a picture very different from that in \cite{brailovskaya2022universality}. Also, \cite{brailovskaya2022universality} uses universality of resolvent (or Stieltjes transform) which is very convenient when eigenvalues are on the real line, but much more work is needed when eigenvalues are dispersed on the complex plane. 

In a companion paper \cite{han2024outliers}, the author further uses free probability ideas from \cite{bandeira2023matrix} to study finite rank perturbations for non-Hermitian random matrices. The results presented in \cite{han2024outliers} have a different flavor from those presented here: in \cite{han2024outliers}, we consider random band matrices that either have independent entries or have an elliptic covariance profile, whereas band matrices are outside the scope of techniques of this paper. On the other hand, the techniques in \cite{han2024outliers} use a Hermitization procedure, and therefore excludes the case with only two bounded moments and very sparse random graphs, as considered here. It seems very hard to find a theory that unifies both the method of characteristic functions as in this paper, and the method of free probability and concentration inequalities in \cite{han2024outliers}. We hope to find a general powerful method that unites both results in \cite{han2024outliers} and in this paper.

\begin{remark} Another important case consists of i.i.d matrix with non-zero mean, that is, we consider $C_{ij}=\frac{\mu}{\sqrt{n}}$. This case is however not covered by the techniques of this paper, as in this case the characteristic function of $n^{-1/2}A_n+C_n$ is no longer a perturbation of that of $n^{-1/2}A_n$, and it is not clear if the method of characteristic functions still applies. When $a_{ij}$ have finite fourth moment, it is proved in \cite{tao2013outliers} that except the single outlier eigenvalue at $\mu\sqrt{n}+o(1)$, all other eigenvalues lie in $\{z\in\mathbb{C}:|z|\leq 1+o(1)\}$. The author is not sure if exactly the same result can be proved under only a second moment condition, and this is left for future work. The existence of exceptional eigenvalue to this matrix model with non-zero mean was first noticed in \cite{andrew1990eigenvalues}, and more precise information on the distribution of this eigenvalue can be found in \cite{silverstein1994spectral} under additional moment conditions.
\end{remark}

\subsection{Finite rank perturbation: sparse i.i.d. matrices}
The third major result of this paper concerns sparse non-Hermitian random matrices. In the companion paper \cite{han2024outliers} the author considered a large family of sparse matrices which, with high probability, has at least, say, $(\log n)^5$ nonzero entries per row and column. This threshold is critical for the free probability approach in \cite{han2024outliers}.
In contrast, we show here that the perturbation result continues to hold for sparse matrices with constant, or very slowly growing, number of nonzero elements per row and column.

\begin{theorem}\label{theorem1.2}
\begin{enumerate}
\item Let $A_n$ be a $n\times n$ random matrix with i.i.d. Bernoulli $(\frac{d}{n})$ entries ($d>1$). Let $C_n=(C_{ij})_{1\leq i,j\leq n}$ be a deterministic $n\times n$ matrix with each entry bounded by $M_1$ and at most $M_2$ non-zero elements ($M_1,M_2$ are independent of $n$). Denote by $c_n(z)=\det(\operatorname{I}-zC_n)$ the reverse characteristic function and assume that $c_n(z)$ converges to $c(z)$ as holomorphic functions on $\{z\in\mathbb{C}:|z|<\frac{1}{\sqrt{d}}\}$. 

Then almost surely, as $n$ tends to infinity, all eigenvalues of $A_n+C_n$ lie in $\{z\in\mathbb{C}:|z|\leq \sqrt{d}(1+o(1))\}$ except the following outliers: an outlier eigenvalue converges to $d$ with multiplicity one, and all the other outliers converge to the solutions of $c(z^{-1})=0$ with $|z|>\sqrt{d}$. Meanwhile, each solution to $c(z^{-1})=0$ with $|z|>\sqrt{d}$ correspond to the limit of a sequence of outlier eigenvalues of $A_n+C_n$.

\item Let $A_n$ be a $n\times n$ random matrix with i.i.d. Bernoulli $(\frac{d_n}{n})$ entries where $d_n\to\infty$ as $n$ tends to infinity and $d_n=n^{o(1)}$. Let $C_n=(C_{ij})_{1\leq i,j\leq n}$ be a deterministic $n\times n$ matrix with each entry bounded by $M_1$ and at most $M_2$ non-zero elements ($M_1$ and $M_2$ are independent of $n$). Denote by $c_n(z)=\det(\operatorname{I}-zC_n)$ and assume that $c_n(z)$ converges to $c(z)$ as holomorphic functions on $\mathbb{D}$. 

Then almost surely, as $n$ tends to infinity, all eigenvalues of $(d_n)^{-\frac{1}{2}}A_n+C_n$ lie in $\{z\in\mathbb{C}:|z|\leq 1+o(1)\}$ except the following outliers: one outlier eigenvalue tends almost surely to infinity, and all the other outlier eigenvalues converge to a solution to $c(z^{-1})=0$ with $|z|>1$. Meanwhile, each solution to $c(z^{-1})=0$ with $|z|>1$ correspond to the limit of a sequence of outlier eigenvalues of $(d_n)^{-\frac{1}{2}}A_n+C_n$.
\end{enumerate}

\end{theorem}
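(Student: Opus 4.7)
The plan is to reduce both parts of the theorem to the characteristic-function framework of Theorem \ref{theorem1} via centering and rescaling the sparse Bernoulli matrix. In both cases, write $A_n = \tilde{A}_n + \mathbb{E}[A_n]$, so that $M_n := A_n + C_n$ (part (1)) or $M_n := d_n^{-1/2}A_n + C_n$ (part (2)) decomposes as a centered i.i.d.\ matrix plus a bounded-rank deterministic perturbation: $M_n = \tilde{A}_n + D_n$ with $D_n := \tfrac{d}{n}J + C_n$ in part (1), or $M_n = d_n^{-1/2}\tilde{A}_n + D_n$ with $D_n := \tfrac{\sqrt{d_n}}{n}J + C_n$ in part (2), where $J = \mathbf{1}\mathbf{1}^T$. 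In part (2) the entries of $d_n^{-1/2}\tilde{A}_n$ already have variance $\tfrac{1-d_n/n}{n}$, so up to a $1-o(1)$ factor we are in the exact setup of Theorem \ref{theorem1}. In part (1) we rescale further by writing $\tilde{A}_n = \sqrt{d(1-d/n)}\,n^{-1/2}B_n$ for an i.i.d.\ mean-zero unit-variance matrix $B_n$; then $M_n = \sqrt{d(1-d/n)}\bigl(n^{-1/2}B_n + \hat D_n\bigr)$ with $\hat D_n := D_n/\sqrt{d(1-d/n)}$, so the reverse characteristic polynomials relate by $q_n(z) := \det(I - zM_n) = \hat q_n\bigl(z\sqrt{d(1-d/n)}\bigr)$, where $\hat q_n$ is of the form treated by Theorem \ref{theorem1}. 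The aim is to show $q_n \to q$ in $H(\{|z|<1/\sqrt{d}\})$ in part (1), or on an appropriate domain in $\mathbb{D}$ in part (2), and then apply Hurwitz's theorem.

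The sparsity assumption on $C_n$ is the key structural input when identifying the deterministic factor $\hat c(w) := \lim_n \det(I - w\hat D_n)$. Expanding $\log\det(I - w\hat D_n) = -\sum_{k\geq 1}\tfrac{w^k}{k}\operatorname{tr}(\hat D_n^k)$ word by word in $\{J, C_n\}$, every mixed word produces a factor of the form $\mathbf{1}^T C_n^{\ell}\mathbf{1}$, which stays bounded because $C_n^\ell$ remains supported on $O(M_2^\ell)$ entries, multiplied by a compensating power of $\tfrac{1}{n}$ that forces the whole mixed term to vanish in the limit. Hence the pure-$J$ and pure-$C_n$ contributions decouple:
\begin{equation*}
\hat c(w) = (1 - w\sqrt{d})\, c\!\left(\tfrac{w}{\sqrt{d}}\right) \quad \text{in part (1)},
\end{equation*}
with an analogous formula in part (2) where the factor $(1 - w\sqrt{d_n})$ has its unique zero at $w = 1/\sqrt{d_n}\to 0$. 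Combining this with the random fluctuation factor produced by the proof of Theorem \ref{theorem1} and changing variables back yields
\begin{equation*}
q(z) = \sqrt{1 - dz^2}\,(1 - dz)\, c(z)\, \exp\!\left(-\sum_{k=1}^\infty \frac{z^k}{k} X_k\right),\qquad |z|<\tfrac{1}{\sqrt{d}},
\end{equation*}
in part (1), where $\{X_k\}$ are the independent complex Gaussians appearing in Theorem \ref{theorem1}, and an analogous formula in part (2) with one root of $q_n$ drifting to $0$. Tightness of $q_n$ in the relevant $H$-space is inherited from that of $\hat q_n$ via the change of variable.

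Finally, Hurwitz's theorem converts $q_n \to q$ into convergence of zeros. In part (1) the zeros of $q$ inside $\{|z|<1/\sqrt{d}\}$ are exactly $z = 1/d$ (from $1-dz$; inside the disk because $d>1$) together with $z = 1/\lambda$ for $\lambda$ solving $c(\lambda^{-1})=0$ with $|\lambda|>\sqrt{d}$, since the remaining factors are almost surely zero-free (the square root has its only zeros at $\pm 1/\sqrt{d}$, on the boundary). In part (2) the same argument, applied to compact subsets $K\subset \mathbb{D}\setminus\{0\}$ (for which the drifting root $1/\sqrt{d_n}$ eventually lies outside $K$), gives the outliers at the solutions of $c(\lambda^{-1})=0$ with $|\lambda|>1$, while the escaping zero of $q_n$ at $1/\sqrt{d_n}+o(1)$ corresponds to the single outlier eigenvalue of $M_n$ tending almost surely to infinity. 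Confinement of the remaining (bulk) eigenvalues in $\{|z|\leq \sqrt{d}(1+o(1))\}$ or $\{|z|\leq 1+o(1)\}$ is inherited from the spectral-radius results of \cite{benaych2020spectral,coste2023sparse}, combined with the observation that a bounded-rank deterministic perturbation can push only boundedly many eigenvalues outside the bulk. I anticipate the main obstacles to be twofold: first, the effective perturbation $\hat D_n$ is a mixture of the two classes in Theorem \ref{theorem1}, so its proof must be extended to simultaneous mixed perturbations, which is what the sparsity-based decoupling argument for $\mathbf{1}^T C_n^\ell \mathbf{1}$ is designed to achieve; second, in part (2) the diverging eigenvalue $\sqrt{d_n}$ prevents uniform convergence of $q_n$ on a full neighborhood of $0\in\mathbb{D}$, forcing us to either factor out the escaping root explicitly or work on domains bounded away from $0$ in order to apply Hurwitz's theorem cleanly.
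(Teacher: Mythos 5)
Your reduction to Theorem \ref{theorem1} by centering ($A_n=\tilde A_n+\tfrac{d}{n}J$) has a genuine gap in part (1). After rescaling $\tilde A_n=\sqrt{d(1-d/n)}\,n^{-1/2}B_n$, the entries of $B_n$ do have mean zero and variance one, but their \emph{distribution depends on $n$}: $b_{ij}$ takes a value of order $\sqrt{n/d}$ with probability $d/n$. Theorem \ref{theorem1} (and the underlying Lemma \ref{fugagqgq}, i.e.\ Lemma 3.4 of \cite{bordenave2021convergence}) is proved for a fixed entry distribution, and its conclusion is false for this triangular array: at constant $d$ the cycle counts $t_k^n$ converge to (compound) Poisson limits, not complex Gaussians, which is exactly why the paper imports the trace asymptotics of \cite{coste2023sparse} instead of reusing Section 2. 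Consequently your claimed limit $q(z)=\sqrt{1-dz^2}\,(1-dz)\,c(z)\exp(-\sum_k X_k z^k/k)$ with Gaussian $X_k$ is not the correct limit in part (1); the paper obtains $e^{\mathscr{R}(z)}\,c(z)\,\prod_{\ell\ge 1}(1-dz^{\ell})^{1/\ell}$ with $\mathscr{R}$ built from Poisson variables. (Your zero-set conclusion inside $\{|z|<1/\sqrt d\}$ happens to coincide, since only the $\ell=1$ factor vanishes there, but the route does not establish it.) A secondary concern: absorbing $\tfrac{d}{n}J$ into the deterministic perturbation puts you in a mixed class-(1)/class-(2) situation that Theorem \ref{theorem1} does not cover, and the paper's own remark warns that dense perturbations $C_{ij}=\mu/n$ of the \emph{sparse} model behave differently, so the "decoupling of mixed words" step would need a full proof in the sparse regime rather than an appeal to boundedness of $\mathbf{1}^TC_n^{\ell}\mathbf{1}$.

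The paper's actual argument keeps $A_n$ uncentered: it quotes the joint convergence in law of $(\operatorname{Tr}(A_n),\dots,\operatorname{Tr}(A_n^k))$ from \cite{coste2023sparse} (Poisson-based for fixed $d$, Gaussian-based for $d_n\to\infty$), proves the single new combinatorial lemma that $\operatorname{Tr}((A_n+C_n)^k)-\operatorname{Tr}(A_n^k)-\operatorname{Tr}(C_n^k)\to 0$ using $\mathbb{E}[a_{ij}]=d_n/n$ and the $O(1)$ support of $C_n$, and then multiplies the known limiting random analytic function by $c(z)$ before applying Rouch\'e. Your instincts about the escaping root at $1/\sqrt{d_n}$ in part (2) and the use of Hurwitz/Rouch\'e at the end are consistent with the paper, but to repair the proposal you would need to replace the appeal to Theorem \ref{theorem1} with the sparse trace asymptotics (or reprove them), which is the substantive content here.
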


We have also proved that the reverse characteristic function $\det(\operatorname{I_n}-z((d_n)^{-\frac{1}{2}}A_n+C_n))$ converges to some limiting random analytic functions, and the limit is closely related to random analytic functions derived in \cite{coste2023sparse}.

\begin{remark}
    Under the assumptions in case (2), it is proved in \cite{basak2019circular}, \cite{rudelson2019sparse} that the empirical spectral density of $A_n$ converges to the circular law. Very recently, existence of limiting empirical spectral density of $A_n$ under the assumption of case (1) was proved in \cite{sah2023limiting}.
\end{remark}

\begin{remark}
We have ruled out the case where $C_n$ has many non-zero elements, say for example $C_{i,j}=\frac{\mu}{n}$ for some $\mu\neq 0$.  In this case the rank one perturbation result should be different from the stated result: we expect that in case (1), the outlying eigenvalue may not converge to $d$, but to some other limit. The proof of such claims may involve complicated computations, which we choose to omit for sake of simplicity.
\end{remark}

\subsection{Finite rank perturbation for product of i.i.d. matrices}
To further illustrate the scope of Theorem \ref{spectralradiusproduct}, we show that the outliers of certain perturbed product matrices can also be determined without assuming finite fourth moments of $A_n$. Similar results were previously proven in \cite{MR4076784}, assuming a finite fourth moment.

\begin{theorem}\label{productoftheorem1} Assume that $C^1,\cdots,C^m$ are deterministic $n\times n$ square matrices that satisfy:
 each matrix $C^i$ has at most $M_1$ non-zero entries and all the nonzero entries of $C^i$ are bounded in norm by $M_2$, for $i=1,\cdots,m$, and constants $M_1,M_2$ are independent of $n$. 

We define a linearized version 
\begin{equation}
\mathcal{C}=\mathcal{C}_n=\begin{pmatrix}
&C^2&&&\\&&C^3&&\\&&&\ddots&\\&&&&C^m\\C^1&&&&
\end{pmatrix},\end{equation}
with the subscript $n$ indicating the dependence of $C^i$ on $n$, and we denote $c_n(z)=\det(1-z\mathcal{C}_n)$ the reverse characteristic function of $\mathcal{C}$. We assume that $c_n(z)$ converges to $c(z)$ as holomorphic functions on $\mathbb{D}$, where the limit is a fixed holomorphic function $c(z)$.

Let $A^1,\cdots,A^m$ be independent copies of i.i.d. random matrices as specified in Theorem \ref{spectralradiusproduct}. 
Then almost surely, as $n$ tends to infinity, the eigenvalues of the product matrix $$\prod_{i=1}^m(n^{-1/2}A^i+C^i),$$ with modulus greater than $1+o(1)$ converge to solutions $z\in\mathbb{C}\setminus\mathbb{D}$ to the equation $c(z^{-\frac{1}{m}})=0$.
\end{theorem}

\section{Spectral radius of product matrix with second moment}

This section is devoted to the proof of Theorem \ref{spectralradiusproduct}.

We will use a standard linearization procedure (used in a similar context in \cite{MR4076784}) to transfer the problem on the product matrix $A^1\cdots A^m$ to the problem on the linearized model $\mathcal{Y}$, where $\mathcal{Y}$ is defined as

\begin{equation}\label{welinearizeit}
\mathcal{Y}=\begin{pmatrix}
&A^2&&&\\&&A^3&&\\&&&\ddots&\\&&&&A^m\\A^1&&&&
\end{pmatrix}.\end{equation}

\begin{lemma}\label{theorem2.1345}
    Theorem \ref{spectralradiusproduct} is implied by the following statement: for any $\epsilon>0$, we have with probability $1-o(1)$, 
    $$\rho(\frac{1}{\sqrt{n}}\mathcal{Y})\leq 1+\epsilon.$$
\end{lemma}

\begin{proof}
    This readily follows from the fact that (see for example \cite{MR4076784}, Proposition 4.1) eigenvalues of $\frac{1}{\sqrt{n}}\mathcal{Y}$ raised to the $m$-th power coincide (with multiplicity $m$) to eigenvalues of the product $n^{-m/2}A^1\cdots A^m$.
\end{proof}

Therefore we are reduced to the matrix $\mathcal{Y}$ (we hide the subscript $n$ from the notation $\mathcal{Y}$) with independent entries. We will carry out the program in \cite{bordenave2021convergence} to the matrix $\mathcal{Y}$: first we define the characteristic functions 
$$
q_n(z)=\det(1-n^{-1/2}z\mathcal{Y}),\quad z\in\mathbb{C},
$$where $1$ is the unit square matrix of size $mn$. We now show $q_n(z)$ converges to a Gaussian analytic function on $(1-\epsilon)\mathbb{D}$.

The first step is to show tightness of $q_n(z)$. We shall rewrite $q_n(z)$ as

$$
q_n(z)=1+\sum_{k=1}^{mn}(-z)^k P_k^{nm},
$$
where 
$$
P_k^{nm}=\sum_{|I|=k,I\subset\{1,\cdots,mn\}}n^{-k/2}\det(\mathcal{Y}(I))
$$ where $\mathcal{Y}(I)$ is the principal submatrix of $\mathcal{Y}$ with only the rows and columns indexed by $I$.

We need a technical lemma on the convergence of random power series:
 
\begin{lemma}\label{lemma3.12345}(\cite{bordenave2021convergence}, Lemma 3.2)
Given $f_n(z)=\sum_{k=0}^\infty z^kf_k^n$ a sequence of power series, and assume that $\{f_n(z)\}$ are tight in $\operatorname{H}(\mathbb{D})$. Then suppose that for any $m\geq 0$,
\begin{equation}
    (f_0^n,\cdots,f_m^n)\overset{\text{law}}{\underset{n\to\infty}{\longrightarrow}}
 (f_0,\cdots,f_m),
\end{equation}
where $(f_0,\cdots,f_m,\cdots)$ is a sequence of common random variables, 
then $f=\sum_{k=0}^\infty z^k f_k$ is defined in $\operatorname{H}(\mathbb{D})$ and $f_n\overset{\text{law}}{\underset{n\to\infty}{\longrightarrow}} f$.
\end{lemma}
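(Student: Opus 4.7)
The plan is to use Prokhorov's theorem to extract subsequential limits and then identify all such limits by matching Taylor coefficients. The space $H(\mathbb{D})$ equipped with the compact-open topology is a separable, completely metrizable (Polish) space, so tightness of $\{f_n\}$ implies (by Prokhorov) that along any subsequence one can extract a further subsequence $f_{n_k}$ converging in law to some $H(\mathbb{D})$-valued random element $g$. If we can show that every such subsequential limit $g$ has the same distribution, then the full sequence converges in law to that common limit, which will be our $f$.

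To pin down the law of $g$, the first step is to observe that for each fixed $k$, the $k$-th Taylor coefficient functional $\pi_k : H(\mathbb{D}) \to \mathbb{C}$, defined by
\[
\pi_k(h) = \frac{1}{2\pi i}\oint_{|z|=r} \frac{h(z)}{z^{k+1}}\, dz
\]
for any $0<r<1$, is continuous in the compact-open topology. Therefore the map $h\mapsto (\pi_0(h),\dots,\pi_m(h))$ is continuous from $H(\mathbb{D})$ to $\mathbb{C}^{m+1}$, and the continuous mapping theorem gives
\[
(\pi_0(f_{n_k}),\dots,\pi_m(f_{n_k})) = (f_0^{n_k},\dots,f_m^{n_k}) \;\xrightarrow{\text{law}}\; (\pi_0(g),\dots,\pi_m(g)).
\]
By the hypothesis of the lemma, the same tuple also converges in law to $(f_0,\dots,f_m)$, so the joint distributions agree for every $m\geq 0$.

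Next I would argue that the law of a random element in $H(\mathbb{D})$ is uniquely determined by its finite-dimensional Taylor-coefficient distributions. This follows because the Borel $\sigma$-algebra on $H(\mathbb{D})$ coincides with the cylindrical $\sigma$-algebra generated by the coefficient functionals $\{\pi_k\}_{k\geq 0}$: any $h \in H(\mathbb{D})$ is uniquely reconstructed from its Taylor coefficients, and the topology on $H(\mathbb{D})$ is generated by the seminorms $\sup_{|z|\leq r}|h(z)|$, each of which is measurable with respect to $\sigma(\pi_k : k\geq 0)$ via the power series expansion. Consequently all subsequential limits $g$ have the same law, which we denote by that of $f$, and $f_n \xrightarrow{\text{law}} f$ along the full sequence.

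The main subtlety (the step requiring the most care) is the measurability/generation statement in the previous paragraph; it is here that one genuinely uses the Polish structure of $H(\mathbb{D})$ together with the fact that power series coefficients determine a holomorphic function. Once that is in hand, the construction $f = \sum_k z^k f_k$ is automatically a well-defined element of $H(\mathbb{D})$ almost surely, because $f$ is supported on $H(\mathbb{D})$ by virtue of being a weak limit of $H(\mathbb{D})$-valued random elements: the tightness hypothesis ensures all mass stays on $H(\mathbb{D})$, so the radius of convergence of the random series is almost surely at least $1$, and $\pi_k(f) = f_k$ almost surely for every $k$.
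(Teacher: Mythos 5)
Your argument is correct and is essentially the standard proof of this lemma (which the paper itself does not prove but imports from \cite{bordenave2021convergence}, Lemma 3.2): tightness plus Prokhorov gives subsequential weak limits in the Polish space $\operatorname{H}(\mathbb{D})$, the continuity of the Cauchy coefficient functionals identifies their finite-dimensional coefficient laws, and the fact that the Borel $\sigma$-algebra of $\operatorname{H}(\mathbb{D})$ is generated by these functionals forces all subsequential limits to coincide. No gaps; this matches the cited proof in both structure and substance.
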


We first verify that our random power series is tight:

\begin{lemma}\label{tightnesslemma}
    The sequence $q_n(z)$ is tight in $H(\mathbb{D})$.
\end{lemma}

\begin{proof} By \cite{shirai2012limit}, Lemma 2.6
    It suffices to bound $\mathbb{E}[|q_n(z)|^2]$ via a deterministic function of $z$ which is continuous and $n$-independent. Since entries of $\mathcal{Y}$ are all independent, we have 
   $$\mathbb{E}\left[\det\mathcal{Y}(I)\overline{\det\mathcal{Y}(J)}\right]=0\quad\text{ if } I\neq J.
    $$ Estimating $\mathbb{E}[|\det\mathcal{Y}(I)|^2]$ needs more care: we can verify that $\mathbb{E}[|\det\mathcal{Y}(I)|^2]\neq 0$ only if $I$ has nonzero intersection with all the $m$ intervals $\{1,\cdots,n\},\{n+1,\cdots,2n\},\cdots,\{(m-1)n+1,\cdots,mn\}$. Indeed if $I$ has zero intersection with any such interval, then $\mathcal{Y}(I)$ has some columns that are completely zero. If we denote by $n_i=|I\cap \{(i-1)n+1,\cdots,in\}$ the cardinality of intersection of $I$ with any of these intervals and denote by $I_i$ the interval of intersection $I\cap \{(i-1)n+1,\cdots,in\}$, then 
    $$
\mathbb{E}[|\det\mathcal{Y}(I)|^2]=\prod_{k=1}^m (n_k)!,\quad\text{ and } \sum_{k=1}^m n_k=|I|,
    $$ and 
    $$
\mathbb{E}[|P_k^{mn}|^2]=n^{-k}\sum_{k=n_1+\cdots+n_m}\sum_{I_1,\cdots,I_m} \prod_{k=1}^m(n_k)!
    $$
where the first sum is over methods to write $k=n_1+\cdots+n_m$ as a sum over positive integers $n_1,\cdots,n_m$ and the second sum is over intervals $I_1,\cdots,I_m$ where each $I_i$ lies in $\{(i-1)n+1,\cdots,in\}$ and has cardinality $n_i$. 

Thus we can estimate via 
\begin{equation}\label{equation233332}
\mathbb{E}[|P_k^{mn}|^2]\leq n^{-k}C_{k-1}^{m-1}\sup_{(n_1,\cdots,n_m):\sum_{i=1}^m n_i=k}\prod_{i=1}^m\frac{n!}{(n_i)!(n-n_i)!}\prod_{k=1}^m(n_k)!
  \leq C_{k-1}^{m-1}.\end{equation}
Then we have, for any $|z|<1$,

$$
\mathbb{E}|q_n(z)|^2\leq1+\sum_{k\geq 0}|z|^kC_{k-1}^{m-1}<\infty,
$$and the bound can be written as a bounded continuous function in $|z|$. This completes the proof of tightness.
\end{proof}

Having verified tightness of $q_n$, we do two technical reductions to $q_n$. First, we outline that to prove the convergence of $q_n$ to some Gaussian analytic function it suffices to work in the case where entries of $\mathcal{Y}$ are bounded a.s. with a bound independent of $n$. This is the content of Lemma \ref{whatisnewlemma2.3}.

 Denote by $(y_{ij})_{1\leq i,j\leq mn}$ the matrix entries of $\mathcal{Y}$, so they are either zero or some $a_{ij}^k$.
 Given $M>0$ define $$\mathcal{Y}^M=\{y_{ij}^M\}_{1\leq i,j\leq n},\quad y_{ij}^M=y_{ij}1_{|y_{ij|\leq M}}-\mathbb{E}[y_{ij}1_{|y_{ij}|\leq M}].$$
Then defining $$P_k^{mn,M}=\sum_{I\subset\{1,2\cdots mn\}:|I|=k}n^{-k/2}\det\left((\mathcal{Y}^M(I)\right)),$$ we will need the following lemma:

\begin{lemma}\label{whatisnewlemma2.3}
Suppose that there exists a sequence of coefficients $(Y_1^M,\cdots,Y_k^M)$ such that 
$$ (P_1^{mn,M},\cdots,P_k^{mn,M})
\overset{\text{law}}{\underset{n\to\infty}{\longrightarrow}} (Y_1^M\cdots,Y_k^M),\quad \text{ and } (Y_1^M\cdots,Y_k^M)\overset{\text{law}}{\underset{M\to\infty}{\longrightarrow}} (Y_1,\cdots,Y_k), 
$$for all $M>0$, then we have 
$$
(P_1^{mn},\cdots,P_k^{mn})\overset{\text{law}}{\underset{n\to\infty}{\longrightarrow}} (Y_1,\cdots,Y_k).
$$
\end{lemma}    
This lemma can be proved similarly to \cite{bordenave2021convergence} , Lemma 3.3 but does not exactly follow from that proof as the structure of $\mathcal{Y}$ is different. We give a sketch:

\begin{proof}
For each fixed $k$ it suffices to check that we can find a sequence of positive numbers $C_M(k)\to 0$ as $M\to\infty$ such that 
    $$
\mathbb{E}[|P_j^{mn,M}-P_j^{mn}|^2]\leq C_M(k),\quad\text{ for all }1\leq j\leq k,
$$ uniformly in $n$.
    Now we bound
    $$\begin{aligned}
\mathbb{E}[|P_k^{mn,M}-P_k^{mn}|^2]&=n^{-k} \sum_{|I|=k,I\subset\{1,\cdots,mn\}} \mathbb{E}[|\det(\mathcal{Y}(I))-\det(\mathcal{Y}^M(I))|^2]\\&\leq C_{k-1}^{m-1}\mathbb{E}[|a_{11}^1[M]\cdots a_{1k}^1[M]-a_{11}^1\cdots a_{1k}^1|^2],
    \end{aligned}$$ where for $i,k\in\{1,2,\cdots,n\}$ and $j\in\{1,2,\cdots,m\}$ we denote by $$a_{ik}^j[M]:=a_{ik}^j1_{|a_{ik}^j|\leq M}-\mathbb{E}[a_{ik}^j1_{|a_{ik}^j|\leq M}],$$ and where we use the superscript $j$ in $a_{ik}^j$ to mean that $a_{ik}^j$ is an entry in the $j$-th matrix $A^j$. In the second line we use a similar combinatorial counting procedure as in \eqref{equation233332}, as only intervals $I$ having nonzero intersections with all blocks do contribute to the sum.

    Therefore we can take $C_M(k)$ to be $C_{k-1}^{m-1}\mathbb{E}[|a_{11}^1[M]\cdots a_{1k}^1[M]-a_{11}^1\cdots a_{1k}^1|^2]$ which converges to $0$ as $M\to\infty$.
\end{proof}

The second reduction, outlined after \cite{bordenave2021convergence}, Lemma 3.3, is that we can rewrite $q_n(z)$ as

\begin{equation}
q_n(z)=\exp\left(-\sum_{k=1}^\infty \frac{\operatorname{Tr}(\mathcal{Y}^k)}{n^{k/2}}\frac{z^k}{k}\right),
\end{equation} and to show the joint convergence of $(P_1^{mn},\cdots,P_k^{mn})$, it suffices to show the joint convergence of $\{\frac{\operatorname{Tr}(\mathcal{Y}^j)}{n^{j/2}}\frac{z^j}{j}\}_{j=1,\cdots,k}$, which can be more easily derived.

By the block structure of $\mathcal{Y}$ we have that $\operatorname{Tr}(\mathcal{Y}^k)=0$ if $k$ is not a multiple of $m$, so we consider only $k$ a multiple of $m$. We separately consider the terms in expansion $\operatorname{Tr}(\mathcal{Y}^k)$ that have $k$ distinct vertices in the expansion, and those that have fewer than $k$ indices.

As we expand the trace, a typical element in the trace $\operatorname{Tr}(\mathcal{Y}^k)$ expansion should be of the form $$a_{i_1,i_2}^{k_1}a_{i_2,i_3}^{k_1+1} a_{i_3,i_4}^{k_1+2}\cdots a_{i_ki_1}^{k_1+k-1}$$ 
for some $k_1\in\{1,2,\cdots,m\}$ and $i_1,i_2,\cdots,i_k\in\{1,2,\cdots,n\}$, and in the superscript we use $k_1+s$ to mean actually $k_1+s$ mod $m$, for any $s\in\mathbb{N}_+$.

This typical element in the expansion is uniquely indexed by $g=(k_1,i_1,i_2,\cdots,i_k)$ where we use the symbol $g$ as the collection of all these variables, and we use the notation
$$a_g:=a_{i_1,i_2}^{k_1}a_{i_2,i_3}^{k_1+1} a_{i_3,i_4}^{k_1+2}\cdots a_{i_ki_1}^{k_1+k-1},\quad g=(k_1,i_1,i_2,\cdots,i_k).
$$

Now we consider two subsets $\mathcal{G}_1(k)^{(n)}$ and $\mathcal{G}_2(k)^{(n)}$ that partition all possible choices of $g$, with the superscript $(n)$ means we take $i_1.\cdots,i_k\in\{1,2,\cdots,n\}$, and where for $g\in \mathcal{G}_1(k)^{(n)}$ we request that $\operatorname{Card}\{i_1,\cdots,i_k\}=k$, and for $g\in \mathcal{G}_2(k)^{(n)}$ we request that $\operatorname{Card}\{i_1,\cdots,i_k\}\leq k-1.$ When $k$ is not divisible by $m$ we set $\mathcal{G}_1(k)^{(n)}=\mathcal{G}_2(k)^{(n)}=\emptyset.$ The two subsets certainly also depend on $m$, but we omit them from the notation.

We further introduce the notation 
$$t_k:=\sum_{g\in \mathcal{G}_1(k)^{(n)}} a_g,\quad r_k:=\sum_{g\in \mathcal{G}_2(k)^{(n)}} a_g,$$ so that $$
\operatorname{Tr}(\mathcal{Y}^k)=t_k+r_k.
$$

We will verify in the following that $t_k$ contains the terms in $\operatorname{Tr}(\mathcal{Y}^k)$ that contribute to the randomness, and $r_k$ contains the terms contributing to a fixed expectation.

We first determine the distributional limit of $t_k$.

\begin{lemma} Fix any $k_1,\cdots,k_r\geq 1$ such that $k_1,\cdots,k_r$ are integer multiples of $m$. Fix any chain of symbols $s_1,\cdots,s_r\in \{\cdot,*\}$. Then we have
$$
\mathbb{E}\left[(\frac{t_{k_1}}{mk_1n^{k_1/2}})^{s_1}\cdots (\frac{t_{k_r}}{mk_rn^{k_r/2}})^{s_r}\right]\to_{n\to\infty}\mathbb{E}\left[(\frac{X_{k_1}}{\sqrt{mk_1}})^{s_1}\cdots (\frac{X_{k_r}}{\sqrt{mk_r}})^{s_r}\right]
$$with the convention that $x^\cdot=x$ and $x^*=\bar{x}$. The sequence $\{X_k\}_{k\geq 1}$ are independent complex Gaussian random variables with $\mathbb{E}[X_k]=0,$  $\mathbb{E}[|X_k|^2]=1$ and  $\mathbb{E}[X_k^2]=\mathbb{E}[(a_{11}^1)^2]^k.$
\end{lemma}

\begin{proof}
    The proof is similar to \cite{bordenave2021convergence}, Lemma 3.4. By Lemma \ref{whatisnewlemma2.3}, we only need to consider the case when all $a_{ij}^k$ are bounded a.s. by some fixed constant $M$, for $M$ sufficiently large.  As we expand the trace and take the union of the edges coming from each trace expansion, the restriction that each edge should be traveled at least twice (to guarantee a non-vanishing expectation) forces that there are at most $\frac{k_1+\cdots+k_r}{2}$ free vertices to choose. If some edges are traveled three times or more in the union, then the total choice of free vertices is no more than $\frac{k_1+\cdots+k_r}{2}-1$, and (since $a_{ij}$ are a.s. bounded) the contribution of such terms vanish in the limit. 

More precisely, we need to compute the moments of 
    $$
\frac{1}{n^{(k_1+\cdots+k_r)/2}} \sum_{g_1\in\mathcal{G}_1(k_1)^{(n)},\cdots,g_r\in\mathcal{G}_1(k_r)^{(n)}}\mathbb{E}[(a_{g_1})^{s_1}\cdots (a_{g_r})^{s_r}].
    $$
    We introduce an equivalence relation where $(g_1,\cdots,g_r)$ is equivalent to $(\tilde{g}_1,\cdots,\tilde{g}_r)$ if there exists a bijection $\theta:\{1,2,\cdots,m\}\times\{1,2,\cdots,n\}\to\{1,2,\cdots,m\}\times\{1,2,\cdots,n\}$ 
    such that $\tilde{g}_i=\theta_*(g_i)$ where $\theta_*$ is the mapping induced by $\theta$ on the labels. Since the value of the expectation of $\mathbb{E}[(a_{g_1})^{s_1}\cdots (a_{g_r})^{s_r}]$ is the same if we replace $(g_1,\cdots,g_r)$ by an equivalent element, if we denote by $\mathcal{G}^{(n)}_{(k_1,\cdots,k_r)}$ the set of equivalence classes, we can then define $W_{[\Gamma]}=\mathbb{E}[(a_{g_1})^{s_1}\cdots (a_{g_r})^{s_r}]$ for $[\Gamma]$ being the equivalence class of $\Gamma$. Then we can compute that 
    $$
\frac{1}{n^{(k_1+\cdots+k_r)/2}}\sum_{g_i\in\mathcal{G}_1(k_i)^{(n)},i\in[r]}\mathbb{E}[(a_{g_1})^{s_1}\cdots (a_{g_r})^{s_r}]=\frac{1}{n^{(k_1+\cdots+k_r)/2}}\sum_{\nu\in\mathcal{G}^{(n)}_{(k_1,\cdots,k_r)}}\operatorname{Card}(\nu)W_\nu,
    $$ and we use $\operatorname{card}(\nu)$ to denote the cardinality of $\nu$ in the product set $\mathcal{G}_1(k_1)^{(n)}\times\cdots\mathcal{G}_r(k_r)^{(n)}$. For any $\mu\in \mathcal{G}_{k_1,\cdots,k_r}^{(n)}$, we associate to it an oriented multigraph $G^\mu$ which is the union of the $g_l$'s with multiple edges. That is, let $V^{g_l}$ and $E^{g_l}$ denote the vertex set and edge set of $g_l$, then we define the vertex set $V^\mu$ and edge set $E^\mu$ of $G^\mu$ as 
    $$
V^\mu=\cup_{l=1}^m V^{g_l},\text{ and }E^\mu=\cup_{l=1}^m (\{l\}\times E^{g_l}). 
    $$Also set up a mapping $s:E^\mu\to V^\mu$, $t:E^\mu\to V^\mu$ via 
    $s(l,(i,j))=i,\quad t(l,(i,j))=j$. For each $v\in V^\mu$ define the outer degree $\deg(v)$ via $$\deg(v)=\operatorname{card}\{e\in E^\mu:s(e)=v\},$$ so that $\operatorname{deg}(v)\geq 2$ for each $v\in V^\mu$. One can also check that $\sum_{v\in V^\mu}\operatorname{deg}(v)=\operatorname{Card}(E^\mu)=k_1+\cdots+k_r$. If $\operatorname{deg}(v_*)\geq 3$ for one $v_*\in V^\mu$ then $\operatorname{card}(V^\mu)<\frac{k_1+\cdots+k_r}{2}.$
This implies that $\frac{\operatorname{card}_n(\mu)}{n^{(k_1+\cdots+k_r)/2}}\to_{n\to\infty}0$. 

Then assume that $\operatorname{deg}(v)=2$ for each $v\in V^\mu$. Then since each edge of $G^\mu$ is multiple and each vertex has degree exactly 2, there must be a partition into pairs of $\{1,2,\cdots,m\}$ into pairs and that 
$$
g_i\text{ is equal to }g_j\text{ up to a permutation of indices if }i\sim j,\quad V^{g_i}\cap V^{g_j}=\emptyset\text{ if } i\neq j,  
$$ where we say $g_i=(k_1,i_1,\cdots,i_k)$ is a permutation of $g_j=(k_1',i_1',\cdots,i_k')$ if there exists some $x\in\{1,2,\cdots,k\}$ such that $i_t'=i_{t+x}$ for each $t=1,\cdots,k$, and $k_1'=k_1+x$, with the summation $t+x$ is mod $k$ and the summation $k_1+x$ is mod $m$. 

Then necessarily $r$ is even and for this $\mu$ we have
$$
\frac{\operatorname{card}_n(\mu)}{n^{(k_1+\cdots+k_r)/2}}\to_{n\to\infty}m^{r/2}\sqrt{k_1\cdots k_r}
$$ where the first $m$ factor comes from choosing a block from the $m$ available blocks to begin with, and the remaining factors $k_1,\cdots,k_r$ come from first choosing the vertices of $g_i$, then choose an arbitrary ordering of $g_j$ relative to $g_i$ when $i\sim j$. There is a square root because each value among $k_1,\cdots,k_r$ will appear twice but only one of them will contribute to the overall multiplicity since for a pair $i\sim j$ with $k_i=k_j$, we choose $n^{k_i}(1+o(1))$ vertices for $g_i$ and then thee are $k_i$ choices for $g_j$ from a permutation of $g_i$.

    Then we can check via Wick's theorem the convergence stated in the lemma.
    This completes the proof. From the proof we see that the only terms contributing to the limit are those that form a disjoint union of cycles that contain only doubled edges, and this is where the i.i.d. assumption adds into more cancellations. 
\end{proof}

Then we determine the distributional limit of $r_k$.

\begin{lemma}
    For each $k\in\mathbb{N}_+$ we have the convergence in distribution 
    $$
\frac{r_k}{n^{k/2}}\to_{n\to\infty} \begin{cases}m\mathbb{E}[(a_{11}^1)^2]^{k/2},\quad \text{if $k$ is divisible by $2m$},\\ 0\quad\text{ else. }\end{cases}
    $$
\end{lemma}

\begin{proof} Again we may assume each entry $a_{ij}^k$ are bounded a.s. by some large constant $M$.
    First we prove that $\mathbb{E}[\frac{r_k}{n^{k/2}}]$ converges to the stated limit. For the expectation of an element $a_g$, $g\in\mathcal{G}_2(k)^{(n)}$ to be nonzero, the directed edges traveled by the elements of $g$ must each be traveled twice or more, so that there are at most $k/2$ choices of free vertices. If $a_g$ has some edges traveled in either direction at least three times, then we have at most $k/2-1$ free vertices, and the contribution of such terms $a_g$ to $\mathbb{E}[r_k]$ vanishes. Thus the only dominating contribution of $a_g$ to $\mathbb{E}[r_k]$ are double cycles, i.e. those paths $a_g$ which satisfy $i_t=i_q$ if and only if $q=t$ or $q=t+k/2$, and enumerating all double cycles gives a contribution in expectation $m\mathbb{E}[(a_{11}^1)^2]^{k/2}n^{k/2}(1+o(1))$ in the limit.

    Next we show $\operatorname{Var}(\frac{r_k}{n^{k/2}})\to 0$ as $n\to\infty.$ It suffices to compute $\sum_{g_i,g_j}\operatorname{Cov}(a_{g_i},a_{g_j})$ over all pairs of $(g_i,g_j)\in\mathcal{G}_2(k)^{(n)}$ such that in the expression $a_{g_i}$ and $a_{g_j}$ there is at least one common element, otherwise the covariance is 0 because of independence. Since all entries $a_{ij}^k$ are bounded by $M$, say, we have
    $$
|\operatorname{Cov}(a_{g_i},a_{g_j})|\leq 4M^{2k}
    $$  
    is bounded independent of $n$. For an element $a_{g_i}$ where $g_i=(k_1,i_1,\cdots,i_k)$ we define $V_i=\{(k_1-1)m+i_1,k_1m+i_2,\cdots,(k_1+k-1)m+i_k\}$ as the corresponding labels on $[mn]\times[mn]$, where the summation is mod $mn$, and  $E_i:=\{((k_1+s-1)m+i_s,(k_{1}+s)m+i_{s+1}),s=1,2,\cdots,k-1\}$. Let $G_i=(V_i,E_i)$ denote the directed graph associated to $g_i$. Define the excess of $G_i$ as $\chi_i=\operatorname{card}(E_i)-\operatorname{card}(V_i)+1\geq 0$, the smallest number of edges to be removed guaranteeing that the remaining subgraph does not have an undirected cycle. The assumption $\operatorname{card}(V_i)\leq k$ implies $\chi_i\geq 2$. 
    
Using this construction, for two tuples $i$ and $j$, define the associated graph with vertices and directed edges as follows:
$$
V_{ij}=V_i\cup V_j,\quad E_{ij}=E_i\cup E_j.$$
The excess for the graph $G_{ij}=(V_{ij},E_{ij})$ is defined as $\chi_{ij}=\operatorname{card}(E_{ij})-\operatorname{card}(V_{ij})+c$ with $c=1$ if $V_i\cap V_j\neq\emptyset$ and $c=2$ otherwise. We have $\chi_{ij}\geq\max(\chi_i,\chi_j)\geq 2$. To guarantee a nonzero covariance, we need that each edge of $E_{ij}$ are visited at least twice, so $\operatorname{card}(E_{ij})\leq k$ and thus $\operatorname{card}(V_{ij})=1-\chi_{ij}+\operatorname{Card}(E_{ij})\leq k-1$. This implies that 
$$
\operatorname{var}(r_k^{(n)})\leq 4C_kM^{2k}n^{k-1}
$$ where $C_k$ is the number of possibilities for choosing the $k$-tuples $i,j$ once $V_{ij}$ is fixed. This verifies that $\operatorname{Var}(\frac{r_k}{n^{k/2}})\to 0$ and completes the proof.
\end{proof}

Now we can complete the proof of Theorem \ref{spectralradiusproduct}.

\begin{proof}[\proofname\ of Theorem \ref{spectralradiusproduct}]
From the previous three lemmas we conclude that 
$$\begin{aligned}&
\left(\frac{\operatorname{Tr}(\mathcal{Y}^m)}{n^{m/2}},\frac{\operatorname{Tr}(\mathcal{Y}^{2m})}{n^{2m/2}},\cdots \frac{\operatorname{Tr}(\mathcal{Y}^{km})}{n^{km/2}}\right)\\&\to_{law} (\sqrt{m^2}X_m,\sqrt{2m^2}X_{2m},\cdots \sqrt{km^2}X_{km})+(\text{mean}_m,\text{mean}_{2m},\cdots, \text{mean}_{km}).
\end{aligned}$$
    where for each $k\in\mathbb{N}_+$ ($X_{km}$) are independent Gaussian variables with mean zero variance one and $\mathbb{E}[X_{km}^2]=\mathbb{E}[a_{11}^2]^{km}$, and for each $k\in\mathbb{N}_+$ $\text{mean}_{km}=\frac{1}{2}(1+(-1)^k)m\mathbb{E}[(a_{11}^1)^2]^{km/2}$. 
Since $q_n$ is tight by Lemma \ref{theorem2.1345}, and the coefficients of $q_n$ converge to specified limits, by Lemma \ref{lemma3.12345}, the sequence $q_n$ also converges in distribution to the specified limiting object.

Then the above shows that 
\begin{equation}\label{expressionqns}q_n
\overset{\text{law}}{\underset{n\to\infty}{\longrightarrow}} \kappa e^{-F},\end{equation}
where 
\begin{equation}\label{productm}F(z)=F_{\eqref{productm}}(z)=\sum_{k\in\mathbb{N}_+,k\text{ divisible by }m} X_k z^k\sqrt{\frac{m}{k}}\end{equation}
and 
\begin{equation}\label{productm2s}\kappa(z)=
\kappa_{\eqref{productm2s}}(z)=\exp\left(-\sum_{k\in\mathbb{N}_+,k\text{ divisible by }2m} z^k\frac{m\mathbb{E}[(a_{11}^1)^2]^{km/2}}{k}\right).
\end{equation}
For any $\epsilon>0$, it is elementary to check that $\kappa(z)$ has no zeros for $z\in(1-\epsilon)\mathbb{D}$ and $F(z)$ has no zeros almost surely for $z\in(1-\epsilon)\mathbb{D}$.
Since $q_n$ converges to $q$ in distribution, by Rouche's theorem with probability $1-o(1)$ $q_n(z)$ has no zeros in $(1-\epsilon)\mathbb{D}$. This implies $\rho(\mathcal{Y})\leq \frac{1}{1-\epsilon}$ by definition of $q_n$, and concludes the proof of Theorem \ref{spectralradiusproduct} via Lemma \ref{theorem2.1345}.
\end{proof}

\section{Perturbation for an i.i.d. matrix with only finite second moment}

We begin with some very general reduction techniques that apply to any deterministic $C_n$ satisfying the assumption of Theorem \ref{theorem1}. In this case the characteristic function has the following expansion:
$$
q_n(z)=\det(1-z(\frac{A_n}{\sqrt{n}}+C_n))=1+\sum_{k=1}^n (-z)^kP_k^{n}
$$
where 

$$
P_k^{n}=\sum_{I\subset \{1,2,\cdots,n\}:|I|=k}n^{-k/2}\det\left((A_n+\sqrt{n}C_n)(I)\right)
$$
with $(A_n+\sqrt{n}C_n)(I)=\{a_{jk}+\sqrt{n}c_{jk}\}_{j,k\in I}$.

The key point here (adopted from \cite{bordenave2021convergence}) is that, once we can show $q_n(z)$ is tight as an element in $\operatorname{H}(\mathbb{D})$, then the deduction of convergence of $q_n(z)$ can be obtained from the convergence of individual coefficients $P_k^{n}$. We note that it seems easier to check the convergence of individual coefficients $P_k^n$ and the tightness of $q_n$, instead of proving directly that $q_n$ converges in $H(\mathbb{D}).$ 

A further technical reduction is truncation to the case where all the $a_{ij}$ are bounded. Given $M>0$ define $$A_n^M=\{a_{ij}^M\}_{1\leq i,j\leq n},\quad a_{ij}^M=a_{ij}1_{|a_{ij|\leq M}}-\mathbb{E}[a_{ij}1_{|a_{ij}|\leq M}].$$
Then defining $$p_k^{n,M}=\sum_{I\subset\{1,2\cdots n\}:|I|=k}n^{-k/2}\det\left((A_n^M+n^{1/2}C_n)(I)\right),$$ we will need the following lemma:

\begin{lemma}\label{greatlemma2.34}
Suppose that there exists a sequence of coefficients $(Y_1^M,\cdots,Y_k^M)$ such that 
$$ (P_1^{n,M},\cdots,P_k^{n,M})
\overset{\text{law}}{\underset{n\to\infty}{\longrightarrow}} (Y_1^M\cdots,Y_k^M),\quad \text{ and } (Y_1^M\cdots,Y_k^M)\overset{\text{law}}{\underset{M\to\infty}{\longrightarrow}} (Y_1,\cdots,Y_k), 
$$for all $M>0$, then we have 
$$
(P_1^n,\cdots,P_k^n)\overset{\text{law}}{\underset{n\to\infty}{\longrightarrow}} (Y_1,\cdots,Y_k).
$$
\end{lemma}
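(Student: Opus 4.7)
The plan is a standard truncation and diagonal argument of Slutsky type. The two hypotheses supply a ``horizontal'' limit (in $n$, for each fixed $M$) and a ``vertical'' limit (in $M$); what is missing is a uniform-in-$n$ control on the truncation error $P_k^n - P_k^{n,M}$. Concretely, I would first establish
$$
\lim_{M \to \infty} \limsup_{n \to \infty} \mathbb{P}\Bigl(\max_{1 \le j \le k} |P_j^n - P_j^{n,M}| > \varepsilon\Bigr) = 0 \qquad \text{for every } \varepsilon > 0,
$$
and then combine this with the two given convergences through a standard three-term decomposition.

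The core technical step is the variance computation for $P_k^n - P_k^{n,M}$. Set $\widetilde{a}_{ij}^M := a_{ij} - a_{ij}^M$, which has mean zero and variance $\sigma_M^2 := \mathbb{E}|\widetilde{a}_{ij}^M|^2 \to 0$ as $M \to \infty$ by the finite second moment assumption and dominated convergence. Expanding
$$
P_k^n - P_k^{n,M} = \sum_{I \subset \{1,\ldots,n\},\ |I|=k} n^{-k/2}\bigl[\det\bigl((A_n+\sqrt{n}C_n)(I)\bigr) - \det\bigl((A_n^M+\sqrt{n}C_n)(I)\bigr)\bigr],
$$
and writing each bracketed difference through the Leibniz formula as a telescoping sum over entries where $A_n$ is replaced by $A_n^M$, each resulting monomial contains at least one factor of the form $\widetilde{a}_{ij}^M$. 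After squaring and taking expectation, independence and centering of the $\widetilde{a}_{ij}^M$ across distinct pairs $(i,j)$ force the surviving cross-terms between index sets $(I,I')$ and permutations $(\sigma,\sigma')$ to pair up indices appropriately; the combinatorial count of surviving pairs is $O(n^k)$, which cancels the $n^{-k}$ prefactor, leaving an overall factor $\sigma_M^2$ (or a higher power) times an $n$-independent constant.

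Once this estimate is in hand, the lemma follows from a routine triangle inequality: for any bounded Lipschitz test function $\varphi: \mathbb{C}^k \to \mathbb{R}$, the quantity $|\mathbb{E}\varphi(P_1^n,\ldots,P_k^n) - \mathbb{E}\varphi(Y_1,\ldots,Y_k)|$ is bounded by the sum of a term controlled by the variance estimate (via Chebyshev and Lipschitz continuity), a term $|\mathbb{E}\varphi(P_1^{n,M},\ldots,P_k^{n,M}) - \mathbb{E}\varphi(Y_1^M,\ldots,Y_k^M)|$ that vanishes as $n \to \infty$ for each fixed $M$ by the first hypothesis, and a term $|\mathbb{E}\varphi(Y_1^M,\ldots,Y_k^M) - \mathbb{E}\varphi(Y_1,\ldots,Y_k)|$ that vanishes as $M \to \infty$ by the second hypothesis. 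Sending $n \to \infty$ first and $M \to \infty$ second closes the argument.

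The main obstacle will be the variance estimate, specifically the control of monomials that mix the $\sqrt{n}$-scaled entries of $C_n$ with the small $\widetilde{a}_{ij}^M$ entries. Since $C_n$ has only $O(1)$ nonzero entries of size $O(1)$, or else satisfies $|C_{ij}| = O(1/n)$, each $\sqrt{n}$ factor is effectively paired with at most $O(1/\sqrt{n})$ of amplitude, so such monomials do not blow up; but the combinatorial bookkeeping needed to extract an explicit $\sigma_M^2$ factor — rather than just an $n$-independent bound — is what requires care and constitutes the delicate part of the proof.
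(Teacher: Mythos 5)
Your proposal is correct and follows essentially the same route as the paper, which simply defers to the converging-together argument of Bordenave et al.\ (Lemma 3.3 there): the two stated convergences plus a uniform-in-$n$ bound of the form $\lim_{M\to\infty}\limsup_{n\to\infty}\mathbb{P}(|P_j^n-P_j^{n,M}|>\varepsilon)=0$, obtained from an $L^2$ estimate on the telescoped Leibniz expansion in which every surviving monomial carries a centered factor $\widetilde{a}_{ij}^M$ of variance $\sigma_M^2\to 0$. You also correctly identify the one point the paper glosses over, namely that the $\sqrt{n}C_{ij}$ factors in case (1) are not bounded but are compensated by the $O(1)$ support of $C_n$, which is exactly the vertex-counting device used elsewhere in the paper.
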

This lemma in the $C_n=0$ case is already proved in \cite{bordenave2021convergence}, Lemma 3.3. The proof in the general case is exactly the same as $C_n$ is just a deterministic matrix with bounded entries. The proof is left to the reader.

Now we use the reduction from \cite{bordenave2021convergence}which was already highlighted in the introduction: at least when $z$ is sufficiently small, we have that 
$$B_n:=-\sum_{k=1}^\infty (\frac{A_n+\sqrt{n}C_n}{\sqrt{n}})^k\frac{z^k}{k}$$
is well-defined and converges to the logarithm of $1-z\frac{A_n+\sqrt{n}C_n}{\sqrt{n}}.$ From the matrix relation $\det(e^{B_n})=e^{\operatorname{Tr}B_n}$, this leads to, for $z$ sufficiently small, 
$$
q_n(z)=\exp\left(-\sum_{k=1}^\infty \frac{\operatorname{Tr}((A_n+\sqrt{n}C_n)^k)}{n^{k/2}}\frac{z^k}{k}
\right).
$$ Since two analytic functions that are equal in a small neighborhood of complex plane must be equal identically, this leads to the following consequence:

For each $k\in\mathbb{N}_+$, the coefficients $(P_1^n,\cdots,P_k^n)$ can be represented as ($n$-independent) polynomial functions of $$\left(\frac{\operatorname{Tr}((A_n+\sqrt{n}C_n))}{\sqrt{n}},\cdots,\frac{\operatorname{Tr}((A_n+\sqrt{n}C_n)^k)}{n^{k/2}}\right)$$ (and in reverse). Thanks to this expression, all the further effort of proving convergence of $q_n(z)$ boils down to proving the convergence of $\operatorname{Tr}(A_n+\sqrt{n}C_n)^k/n^{k/2}$ as $n$ tends to infinity.

\subsection{Illustrative example: a diagonal matrix of rank one}

For illustrative purpose, we begin with the proof of Theorem \ref{theorem1} in the simplest case where $C_n$ is a diagonal matrix of rank one. Then without loss of generality we may assume that $C_n=\theta E_{1,1}$ for some $\theta\neq 0$, where $E_{11}$ is the $n$ by $n$ matrix with 1 in its entry $(1,1)$ and $0$ everywhere else. We expand the trace as follows:
\begin{equation}\label{traceexpansion}\begin{aligned}
\operatorname{Tr}((A_n+\sqrt{n}C_n)^k)&=\sum_{\substack{1\leq i_1,i_2,\cdots, i_k\leq n,\\|\{i_1,\cdots,i_k\}|=k}}a_{i_1i_2}a_{i_2i_3}\cdots a_{i_ki_1}+\sum_{\substack{1\leq i_1,i_2,\cdots i_k\leq n,\\ |\{i_1,\cdots,i_k\}|\leq k-1}} a_{i_1i_2}a_{i_2i_3}\cdots a_{i_ki_1}\\
&+\sum_{\ell=1}^{k-1}(\sqrt{n}\theta)^\ell 
\sum_{\substack{1\leq i_1,i_2,\cdots, i_{k-\ell}\leq n,\\|\{i_1,\cdots,i_{k-\ell}\}|=k-\ell,\\ 1\in \{i_1,\cdots,i_{k-\ell}\}}}a_{i_1i_2}a_{i_2i_3}\cdots a_{i_{k-\ell} i_1}\\&+\sum_{\ell=1}^{k-1}(\sqrt{n}\theta)^\ell 
\sum_{\substack{1\leq i_1,i_2,\cdots, i_{k-\ell}\leq n,\\|\{i_1,\cdots,i_{k-\ell}\}|<k-\ell,\\ 1\in \{i_1,\cdots,i_{k-\ell}\}}}a_{i_1i_2}a_{i_2i_3}\cdots a_{i_{k-\ell} i_1}\\&+(\sqrt{n}\theta)^k.
\end{aligned}
\end{equation}

Note that thanks to Lemma \ref{greatlemma2.34}, we may assume that $a_{ij}$ are bounded and $|a_{ij}|\leq M$ almost surely.

Let $\mathfrak{C}_k^n$ be the set of $k$-directed cycles in $\{1,2,\cdots,n\}$, where a $k$-directed cycle is defined as a directed cycle of length $k$ on indices $1,2,\cdots,n$ having $k$ different vertices. Given $g\in \mathfrak{C}_k^n$ denote by $a_g=\prod_{e \text{ edge of }g} a_e$. Then the summation
$$
\sum_{\substack{1\leq i_1,i_2,\cdots, i_k\leq n,\\|\{i_1,\cdots,i_k\}|=k}}a_{i_1i_2}a_{i_2i_3}\cdots a_{i_ki_1}
$$
can be restated as
$$
t_k^n=\sum_{g\in\mathfrak{C}_k^n} a_g.
$$

The joint distribution of $t_k^n$ can be computed as in \cite{bordenave2021convergence}, Lemma 3.4:

\begin{lemma}(\cite{bordenave2021convergence}, Lemma 3.4)\label{fugagqgq}
    Given any $k_1,\cdots,k_m\geq 1$ and any sequence $s_1,\cdots,s_m\in\{\cdot,*\}$:
    \begin{equation}
        \mathbb{E}\left[\left(\frac{t_{k_1}^n}{n^{k_1/2}}\right)^{s_1}\cdots \left(\frac{t_{k_m}^n}{n^{k_m/2}}\right)^{s_m}
        \right]\to \mathbb{E}\left[\left(\frac{X_{k_1}}{\sqrt{k_1}}\right)^{s_1}\cdots\left(\frac{X_{k_m}}{\sqrt{k_m}}\right)^{s_m}
        \right],
    \end{equation} with the convention that $x^\cdot=x$ and $x^*=\bar{x}$,
    and that $\{X_k\}_{k\geq 1}$ are a sequence of independent complex Gaussian random variables with $\mathbb{E}[X_k]=0$, $
    \mathbb{E}[|X_k|^2]=1$ and $\mathbb{E}[X_k^2]=\mathbb{E}[a_{11}^2]^k$ for any $k\geq 1$.
\end{lemma}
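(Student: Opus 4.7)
The plan is to prove the lemma by the method of moments, showing that all joint mixed moments of $(t_{k_1}^n/n^{k_1/2}, \ldots, t_{k_m}^n/n^{k_m/2})$ converge to the corresponding mixed moments of $(X_{k_1}/\sqrt{k_1}, \ldots, X_{k_m}/\sqrt{k_m})$. Since the limiting variables form a Gaussian family and are determined by their moments, this establishes convergence in distribution. As a first step, I would invoke the truncation Lemma \ref{greatlemma2.34} in the $C_n=0$ specialisation to reduce to the case $|a_{ij}| \leq M$ almost surely, ensuring that all moments exist and sums of products are uniformly controlled.

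Expanding each $t_{k_j}^n = \sum_{g \in \mathfrak{C}_{k_j}^n} a_g$, the joint moment becomes a sum over $m$-tuples $(g_1, \ldots, g_m)$ of directed cycles (with distinct vertices within each cycle) of expectations of products of $a_{g_j}^{s_j}$, where I write $a_g^{\cdot} := a_g$ and $a_g^{*} := \overline{a_g}$. Independence and mean-zero of the entries force every directed edge traversed by $\cup_j g_j$ to appear with total multiplicity at least two in the combined multigraph; therefore the number of distinct directed edges is at most $\sum_j k_j / 2$. A standard counting argument, building the union graph by adding the cycles one at a time and tracking the number of new vertices introduced, shows that the number $V$ of distinct vertices satisfies $V \leq \sum_j k_j / 2$, with equality precisely when the $m$-tuple splits into $(\sum_j k_j)/2$ pairs of cycles each pair having the same underlying vertex set and the same set of directed edges. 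Since $a_{ij}$ and $a_{ji}$ are independent in the non-Hermitian setting, reversed-orientation pairings contribute zero, so valid leading-order pairings consist of cycles that coincide as directed cycles, which in particular forces both members of a pair to have the same length.

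Consequently cycles of different lengths never interact at leading order, and the moments factor across distinct values of $k$, yielding independence of the $X_k$. Fix $k$ and consider the $p+q$ slots coming from $p$ copies of $t_k^n$ and $q$ copies of $\overline{t_k^n}$: a perfect pairing of the slots contributes $\mathbb{E}[a_{11}^2]^k$ for each pair of $\cdot$-slots, $\overline{\mathbb{E}[a_{11}^2]}^k$ for each pair of $*$-slots, and $1$ for each mixed pair; the number of realisations of the shared cycle on $\{1, \ldots, n\}$ is $(n^k/k)(1+o(1))$. Dividing by $n^{(p+q)k/2}$ and gathering the $1/k$ factors into $1/(\sqrt{k})^{p+q}$ reproduces exactly the Wick expansion of $\mathbb{E}[(X_k/\sqrt{k})^p (\overline{X_k/\sqrt{k}})^q]$ for the complex Gaussian $X_k$ with $\mathbb{E}[|X_k|^2] = 1$ and $\mathbb{E}[X_k^2] = \mathbb{E}[a_{11}^2]^k$.

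The main obstacle is the rigorous demonstration that every non-pair-matched configuration is strictly sub-leading in $n$: the counting must show that any deviation from the pair-matching pattern (e.g., three cycles sharing a single edge, or two cycles overlapping on a proper sub-arc rather than on their entire edge sets) forces $V$ to drop by strictly more than the edge savings allow, producing a power of $n$ smaller than $n^{\sum_j k_j/2}$. The bounded-entries reduction then ensures uniform control on residual contributions from edges of multiplicity greater than two. This genus-type argument is carried out in detail in \cite{bordenave2021convergence}, Lemma 3.4 for single-$k$ moments, and I would follow that scheme, the extension to joint moments across different $k$'s being automatic since, as noted above, matched cycles are forced to have identical length.
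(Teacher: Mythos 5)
Your proposal is correct and follows essentially the same moment-method, cycle-pairing argument as the proof of Lemma 3.4 in \cite{bordenave2021convergence}, which the paper cites verbatim rather than reproving: truncation to bounded entries, the constraint that every directed edge be traversed at least twice, the vertex bound $V\le \tfrac{1}{2}\sum_j k_j$ with equality forcing a perfect matching of identical directed cycles (reversed orientations killed by independence of $a_{ij}$ and $a_{ji}$), and the resulting Wick expansion with the $1/k$ factor from the $\sim n^k/k$ cycle count. The only slip is cosmetic: a perfect matching of the $m$ cycles consists of $m/2$ pairs, not $(\sum_j k_j)/2$.
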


Now we analyze the term on the second line of \eqref{traceexpansion}. Let $\mathfrak{C}_{k-\ell,1}^n$ be the set of $k-\ell$-directed cycles in $\{1,2,\cdots,n\}$ that contain 1. Given $g\in \mathfrak{C}_{k-\ell}^n$ denote by $a_g=\prod_{e \text{ edge of }g} a_e$. Then for the second line of \eqref{traceexpansion}, it suffices to study, for each $\ell=1,\cdots,k-1$: 
$$
t_{k,1}^{\ell,n}=(\sqrt{n}\theta)^\ell \sum_{g\in\mathfrak{C}_{k-\ell,1}^n} a_g.
$$

We will now state a series of lemmas and prove them in the sequel. We begin with

\begin{lemma}\label{lemmas2.4!}
    For each $k$ and each $\ell=1,2,\cdots,k-1$, we have the convergence in probability
    \begin{equation}
        \frac{t_{k,1}^{\ell,n}}{n^{k/2}}\overset{\text{law}}{\underset{n\to\infty}{\longrightarrow}} 0.
    \end{equation}
\end{lemma}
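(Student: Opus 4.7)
The plan is a second moment method combined with Chebyshev's inequality. Setting $m := k - \ell \geq 1$, I rewrite
$$\frac{t_{k,1}^{\ell,n}}{n^{k/2}} \;=\; \frac{\theta^\ell}{n^{m/2}} \sum_{g \in \mathfrak{C}_{m,1}^n} a_g.$$
Because $g$ is a simple directed cycle on $m$ distinct vertices, its $m$ edges correspond to distinct ordered pairs $(i,j)$ and therefore $a_g = \prod_{e \in g} a_e$ is a product of $m$ independent mean-zero variables. Consequently $\mathbb{E}[a_g] = 0$ for every $g$, so $\mathbb{E}[t_{k,1}^{\ell,n}/n^{k/2}] = 0$.

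For the second moment I would expand
$$\mathbb{E}\!\left|\frac{1}{n^{m/2}} \sum_{g \in \mathfrak{C}_{m,1}^n} a_g\right|^{2} \;=\; \frac{1}{n^m} \sum_{g,g' \in \mathfrak{C}_{m,1}^n} \mathbb{E}[a_g \overline{a_{g'}}]$$
and observe that, by independence of distinct entries of $A_n$, the expectation $\mathbb{E}[a_g \overline{a_{g'}}]$ factorizes over ordered pairs $(i,j)$; any pair appearing in only one of the two cycles contributes a vanishing factor $\mathbb{E}[a_{ij}] = 0$ or $\mathbb{E}[\overline{a_{ij}}] = 0$. Hence the cross term is nonzero only when $g$ and $g'$ share the same set of directed edges, and in that case it equals $\prod_{e \in g} \mathbb{E}[|a_e|^2] = 1$. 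A simple directed $m$-cycle is reconstructed from its edge set up to the $m$ cyclic rotations of its vertex sequence, and each such rotation still passes through vertex $1$, so for every $g$ there are exactly $m$ admissible choices of $g'$. Combined with the elementary count $|\mathfrak{C}_{m,1}^n| = m(n-1)(n-2)\cdots(n-m+1) = O(n^{m-1})$, this yields
$$\mathbb{E}\!\left|\frac{t_{k,1}^{\ell,n}}{n^{k/2}}\right|^{2} \;\leq\; \frac{|\theta|^{2\ell} \, m^{2}}{n}\bigl(1+o(1)\bigr) \;\longrightarrow\; 0,$$
and Chebyshev's inequality delivers the claimed convergence in probability (in fact in $L^2$).

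The argument is essentially unobstructed, since the final estimate carries a full factor of $1/n$ to spare. The one step that merits attention is the combinatorial identification of matching pairs $(g,g')$ with the cyclic rotations of a single underlying cycle, which rests on the fact that a simple directed cycle is determined by its edge set up to rotation. Note also that the truncation supplied by Lemma \ref{greatlemma2.34} is not strictly necessary for this lemma, since the bound only uses $\mathbb{E}[|a_{ij}|^2] = 1$.
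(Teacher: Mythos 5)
Your proof is correct and follows the same second-moment strategy as the paper's; your combinatorial step is in fact sharper, since you exploit that both cycles are simple to identify the nonvanishing cross terms exactly as the $m$ cyclic rotations of $g$ (the paper instead gives a looser doubled-circle count, including an "edge traversed more than twice" case that cannot actually occur for two simple cycles). Your observation that the truncation of Lemma \ref{greatlemma2.34} is not needed for this particular lemma is also valid, since every ordered pair appears at most twice in $a_g\overline{a_{g'}}$ and so only second moments of the entries enter the bound.
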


Having determined the limiting random term, at this time it suffices to consider the limiting constant term

$$
r_k^n=\sum_{\substack{1\leq i_1,\cdots,i_k\leq n\\ |\{i_1,\cdots,i_k\}|<k}}a_{i_1i_2}a_{i_2i_3}\cdots a_{i_{k-1}i_k}a_{i_ki_1}.
$$
and for each $\ell=1,\cdots,k-1$, define
$$
r_{k,1}^{\ell,n}=(\sqrt{n}\theta)^\ell  \sum_{\substack{1\leq i_1,i_2,\cdots, i_{k-\ell}\leq n,\\|\{i_1,\cdots,i_{k-\ell}\}|\leq k-\ell-1,\\1\in \{i_1,\cdots,i_{k-\ell}\}}}a_{i_1i_2}a_{i_2i_3}\cdots a_{i_{k-\ell}i_1}.
$$
For $r_k^n$, we have the following deterministic limit
from \cite{bordenave2021convergence}: 
\begin{lemma}\label{curious}(\cite{bordenave2021convergence}, Lemma 3.5) We have the following deterministic limit
$$ \frac{r_k^n}{n^{k/2}}\overset{\text{law}}{\underset{n\to\infty}\longrightarrow}     \begin{cases}
    \mathbb{E}[a_{11}^2]^{k/2},\quad  k \text{ even },
    \\ 0, \quad k \text{ odd }.
    \end{cases}$$
\end{lemma}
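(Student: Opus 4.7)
My plan is to establish the convergence by the method of moments combined with a combinatorial analysis of closed walks. After the truncation reduction supplied by Lemma \ref{greatlemma2.34}, I may assume $|a_{ij}|\leq M$ almost surely. The sum defining $r_k^n$ is indexed by closed walks $w=(i_1,\ldots,i_k)$ of length $k$ on $\{1,\ldots,n\}$ visiting strictly fewer than $k$ distinct vertices, and I would organise these walks by their \emph{shape}, that is, by the equivalence class under relabelling of indices.

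For the mean, the key fact to exploit is that $\mathbb{E}[a_w]$ vanishes unless every distinct directed edge used by $w$ appears at least twice, which is immediate from independence and the mean-zero hypothesis. Writing $v$ for the number of distinct vertices and $e$ for the number of distinct directed edges in $w$, this forces $2e\leq k$, so $e\leq k/2$, and connectivity of the walk then gives $v\leq e+1\leq k/2+1$. A case analysis using in-degree equals out-degree at each vertex sharpens this to $v\leq k/2$: purely tree-shaped walks in which every edge is traversed twice in the same direction fail the flow constraint at the leaves, so either the undirected skeleton is a cycle with a consistent orientation, or at least one edge is used in both directions, and in both regimes $v\leq k/2$. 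The extremal case $v=k/2$ turns out to be realised only by doubled directed cycles: for distinct $a_1,\ldots,a_{k/2}$, the walk $a_1\to a_2\to\cdots\to a_{k/2}\to a_1\to a_2\to\cdots\to a_{k/2}\to a_1$, which requires $k$ to be even. Each such walk contributes $\prod_{j}\mathbb{E}[a_{a_j a_{j+1}}^2]=\mathbb{E}[a_{11}^2]^{k/2}$, and there are $n(n-1)\cdots(n-k/2+1)\sim n^{k/2}$ of them. All remaining shapes contribute $O(n^v)$ with $v<k/2$, and hence $o(n^{k/2})$ in total. This yields $\mathbb{E}[r_k^n]/n^{k/2}\to\mathbb{E}[a_{11}^2]^{k/2}$ for $k$ even and $\to 0$ for $k$ odd.

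To upgrade mean convergence to convergence in probability (which is convergence in law to a deterministic limit), I would bound the variance. Writing $\mathrm{Var}(r_k^n)=\sum_{w,w'}\mathrm{Cov}(a_w,a_{w'})$, the covariance vanishes whenever $w$ and $w'$ share no edges. The surviving pairs correspond to merged walks whose edge-multiset has every directed edge of multiplicity at least two; a direct extension of the shape analysis above shows that the merged walk $(w,w')$ uses at most $k-1$ distinct vertices in total, giving $\mathrm{Var}(r_k^n)=O(n^{k-1})$. Hence $\mathrm{Var}(r_k^n/n^{k/2})=O(1/n)\to 0$, and together with the mean calculation this delivers convergence in probability to the stated deterministic limit.

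The main obstacle I anticipate is the combinatorial step pinning down $v\leq k/2$ and identifying the doubled directed cycles as the unique extremal shapes. The crude bound from connectivity only gives $v\leq k/2+1$, and the improvement requires careful invocation of in-degree/out-degree balance to rule out tree-shaped walks in which each edge is traversed twice in a single direction. Handling all possible mixtures of unidirectional and bidirectional edges, together with degenerate cases involving self-loops, is where the bulk of the effort lies.
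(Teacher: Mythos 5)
Your argument is correct and is essentially the same as the one the paper relies on: the paper does not reprove this lemma but imports it from the cited reference, and both that proof and the paper's own treatment of the analogous perturbed sums (Lemma \ref{lemma2.6!}) proceed exactly as you do — mean via the edge-multiplicity-$\geq 2$ constraint and the identification of doubled directed cycles as the unique shapes with $k/2$ vertices, then a Chebyshev step with the merged-walk/excess count giving $\operatorname{Var}(r_k^n)=O(n^{k-1})$. The combinatorial sharpening you flag (in/out-degree balance forcing tree edges to be traversed in both directions, hence four times each) is indeed the crux, and your sketch of it is the right one.
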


For $r_{k,1}^{\ell,n}$ we will also prove a similar result:
\begin{lemma}\label{lemma2.6!} We have the following convergence for each $\ell=1,\cdots,k-1$:
    $$ \frac{r_{k,1}^{\ell,n}}{n^{k/2}}\overset{\text{law}}{\underset{n\to\infty}\longrightarrow}    0.$$
\end{lemma}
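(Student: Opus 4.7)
My plan is to establish $L^2$-convergence: I will bound the second moment $\mathbb{E}[|r_{k,1}^{\ell,n}|^2]$ and show it is $o(n^k)$, which by Chebyshev's inequality yields convergence in probability (hence in law, the limit being constant) of $r_{k,1}^{\ell,n}/n^{k/2}$ to $0$. By Lemma~\ref{greatlemma2.34} I may assume $|a_{ij}|\leq M$ throughout. Set $m:=k-\ell\geq 1$; since $(\sqrt{n}\theta)^{2\ell}/n^k=\theta^{2\ell}/n^m$, the task reduces to proving
\[
\mathbb{E}\bigl[|T^{\ell,n}|^2\bigr]=o(n^m),\qquad T^{\ell,n}:=\sum_{\substack{1\leq i_1,\ldots,i_m\leq n,\\ |\{i_1,\ldots,i_m\}|\leq m-1,\\ 1\in\{i_1,\ldots,i_m\}}} a_{i_1i_2}a_{i_2i_3}\cdots a_{i_mi_1}.
\]
The case $m=1$ is trivial since then the constraint $|\{i_1\}|\leq 0$ makes the sum empty.

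\textbf{Expansion of the second moment.} For $m\geq 2$, expand
\[
\mathbb{E}\bigl[|T^{\ell,n}|^2\bigr]=\sum_{(W_1,W_2)} \mathbb{E}\bigl[a_{W_1}\overline{a_{W_2}}\bigr],
\]
the sum being over ordered pairs of closed walks $W_1,W_2$ of length $m$ on $\{1,\ldots,n\}$, each with strictly fewer than $m$ distinct vertices and each containing vertex $1$. Independence of the entries and the fact that $\mathbb{E}[a_{uv}]=0$ force nonzero summands to have the property that every directed edge appearing in the edge-multiset $W_1\cup W_2$ appears at least twice in total. In particular, if $s$ denotes the number of distinct directed edges in $W_1\cup W_2$, then $2s\leq 2m$, i.e.\ $s\leq m$; and each nonzero summand is bounded in modulus by $M^{2m}$.

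\textbf{Structural bound on vertex count.} The core combinatorial claim is that in any such contributing pair, the number $r$ of distinct vertices in $W_1\cup W_2$ satisfies $r\leq m-1$. Let $s_{\mathrm{und}}\leq s$ be the edge count of the underlying undirected graph. Since both walks pass through vertex $1$, the union is connected, so $r-1\leq s_{\mathrm{und}}\leq m$, giving the naive bound $r\leq m+1$. To refine this, I use that the combined multi-digraph is Eulerian (each $W_i$ is a closed walk, so in-degree equals out-degree at every vertex, and this property is preserved by taking unions). The case $r=m+1$ forces the underlying graph to be a tree in which each undirected edge carries a single direction used exactly twice; but a leaf of such a tree has unequal in- and out-degree, contradicting Eulericity. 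The case $r=m$ splits as tree ($s_{\mathrm{und}}=m-1$), ruled out by the same leaf argument even if one edge is doubled, or pure $m$-cycle ($s_{\mathrm{und}}=m$) with consistently oriented edges each used twice; but then cutting the unique length-$2m$ Eulerian circuit at its midpoint shows that each $W_i$ must be a simple $m$-cycle visiting $m$ distinct vertices, contradicting the degeneracy hypothesis $|\{i_1,\ldots,i_m\}|\leq m-1$.

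\textbf{Conclusion and main obstacle.} With $r\leq m-1$ and vertex $1$ fixed, the number of ways to choose the remaining $r-1$ vertices is $O(n^{r-1})=O(n^{m-2})$, and the number of admissible walk-pairs on any fixed vertex set of size $r$ is bounded by a constant $C_m$ depending only on $m$. Combining yields $\mathbb{E}[|T^{\ell,n}|^2]\leq C_m M^{2m}\, n^{m-2}=o(n^m)$, which completes the proof. The principal obstacle is the structural claim $r\leq m-1$: its verification requires using simultaneously the Eulerian property of each closed walk, the connectedness forced by the shared vertex $1$, and crucially the degeneracy constraint $|\{i_1,\ldots,i_m\}|<m$ on each individual walk---without this last ingredient, the $r=m$ configuration of an oriented $m$-cycle traversed twice would survive and the bound would degrade to $O(n^{m-1})$, failing to give $o(n^m)$ for $m=1$ and only marginally working otherwise.
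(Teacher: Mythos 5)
Your proposal is correct and follows essentially the same route as the paper: a moment-method path-counting argument in which every distinct directed edge of the combined walks must be traversed at least twice, the union is connected through the fixed vertex $1$, and the degeneracy constraint $|\{i_1,\dots,i_m\}|\le m-1$ is what kills the doubled-$m$-cycle configuration. The only difference is organizational: you bound the raw second moment in one pass, whereas the paper bounds the expectation and the variance separately (centering lets it assume the two walks share an edge, while you instead must handle the edge-disjoint case via the per-walk parity and the degeneracy hypothesis, which you do correctly).
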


\begin{proof}[\proofname\ of Lemma \ref{lemmas2.4!}] It suffices to expand
\begin{equation}\label{fucsg}n^{-k}\mathbb{E}[|t_{k,1}^{\ell,n}|^2]
\end{equation} and compute the expectation. For multi-indices $\mathbf{i}=(i_1,\cdots,i_{k-\ell})$, $\mathbf{j}=(j_1,\cdots,j_{k-\ell})$ in $\mathfrak{C}_{k-\ell,1}^n$, define $a_\mathbf{i}=a_{i_1i_2}a_{i_2i_3}\cdots a_{i_{k-\ell}i_i}$ . Then in order for $\mathbb{E}[a_\mathbf{i}a_\mathbf{j}]\neq 0$, one must have that the union of the edges traversed by $\mathbf{i}$ and $\mathbf{j}$ must each be traversed at least twice, because each $a_{ij}$ has mean zero. 

If there is an edge traversed more than twice by the union $\mathbf{i}\cup\mathbf{j}$, then by the handshaking lemma, the union of indices contained in $\mathbf{i}\cup \mathbf{j}$ is less than $k-\ell$, and except the index 1 already fixed, there are strictly less than $k-\ell-1$ number of distinct indices, and the contribution to the overall expectation \eqref{fucsg} of these terms is less than $O(n^{-k}n^{k-\ell-2}n^\ell \theta^{2\ell}(4M)^{2k})=O(n^{-2})$ (where $M$ is the almost sure bound on $a_{ij}$ thanks to Lemma \ref{greatlemma2.34}) and hence this term is negligible.

In the remaining case, each edge in the union of $\mathbf{i}\cup\mathbf{j}$ (multi-edges counted multiple times) are traversed exactly twice, and hence via a graphical reasoning, $\mathbf{i}\cup \mathbf{j}$ must be a doubled circle with $k-\ell$ edges and at most $k-\ell-1$ indices different from 1. The contribution of these terms to the expression \eqref{fucsg} is at most 
$$
n^{-k} n^{k-\ell-1}n^{\ell}\theta^{2\ell}(4M)^{2k}=O(n^{-1}),
$$ and hence the lemma is proved.
\end{proof}

\begin{proof}[\proofname\ of Lemma \ref{lemma2.6!}]
    We first prove that 
     $$\mathbb{E}\left[ \frac{r_{k,1}^{\ell,n}}{n^{k/2}}\right]{\underset{n\to\infty}\longrightarrow}     0.$$
     To see this, observe that for a cycle to contribute to the sum in expectation, from the cycle $(i_1,i_2,\cdots,i_{k-\ell})$ we construct a multigraph on $i_1,\cdots,i_{k-\ell}$ and every edge has to be repeated twice for it to contribute to the expectation. Next, if some edge has outer degree three, the contribution to the limit of such graphs is negligible by a similar reasoning as in previous lemma. Thus for the sum to contribute, the multigraph must be a multicircle and $k-\ell$ has to be even. Regardless of the parity of $k-\ell$, the number of circles of length $\frac{k-\ell}{2}$ that has 1 in its indices is at most $n^{\frac{k-\ell}{2}-1}$. Thus $\left|\mathbb{E}[r_{k,1}^{\ell,n}]\right|\leq n^{\frac{k}{2}-1}$ and hence the claim is proved. 

We next prove that $\operatorname{Var}\left[\frac{r_{k,1}^{\ell,n}}{n^{k/2}}\right]{\underset{n\to\infty}\longrightarrow}    0.$ We follow similar steps as in the proof of \cite{bordenave2021convergence},Lemma 3.5. For any $\mathbf{i}=(i_1,\cdots,i_{k-\ell})$, denote by $a_\mathbf{i}=a_{i_1i_2}a_{i_2i_3}\cdots a_{i_{k-\ell}a_{i_i}}$, we have trivially the bound 
$$
\mathbb{E}[(a_\mathbf{i}-\mathbb{E}(a_\mathbf{i}))(a_\mathbf{j}-\mathbb{E}(a_\mathbf{j}))]\leq 4M^{2k} 
$$ for any constant $M$ larger than the modulus of the support of $a_{11}$. 

Meanwhile, for any index $\mathbf{i}$, we regard $\mathbf{i}$ as a path of length $k$ and define a graph $(V_\mathbf{i},E_\mathbf{i})$ where $V_\mathbf{i}=\{i_1,\cdots,i_{k-\ell}\}$ and $E_\mathbf{i}=\{(i_r,i_{r+1}), r=1,\cdots,k-\ell\}$. 

Define the excess of graph $(V_\mathbf{i},E_\mathbf{i})$ as:
$$
\chi_\mathbf{i}=\operatorname{Card}(E_\mathbf{i})-\operatorname{Card}(V_\mathbf{i})+1\geq 0,
$$ which is the maximal number of edges to be removed from $(V_\mathbf{i},E_\mathbf{i})$ so that the resulting graph has no undirected cycles.
Then by definition $\chi_\mathbf{i}\geq 2$. Consider another multi-index $\mathbf{j}$ satisfying the same assumption, then $\chi_\mathbf{j}\geq 2$. We join the graphs $(V_\mathbf{i},E_\mathbf{i})$ and $(V_\mathbf{j},E_\mathbf{j})$ as follows: 
$$
V_{\mathbf{i}\mathbf{j}}=V_\mathbf{i}\cup V_\mathbf{j},\quad E_{\mathbf{i}\mathbf{j}}=E_\mathbf{i}\cup E_\mathbf{j}.
$$
Then defining $\chi_{\mathbf{i}\mathbf{j}}=\operatorname{Card}(E_{\mathbf{i}\mathbf{j}})-\operatorname{Card}(V_{\mathbf{i}\mathbf{j}})+c $ with $c=1$ when $V_\mathbf{i}\cap V_\mathbf{j}\neq \emptyset$ and $2$ otherwise, we deduce that $\chi_{\mathbf{i}\mathbf{j}}\geq \max(\chi_\mathbf{i},\chi_\mathbf{j})\geq 2$.

In order for $\mathbb{E}\left[(a_\mathbf{i}-\mathbb{E}(a_\mathbf{i}))(a_\mathbf{j}-\mathbb{E}(a_\mathbf{j}))\right]$ to be non-zero, we must require $V_\mathbf{i}\cap V_\mathbf{j}\neq\emptyset$ and $E_\mathbf{i}\cap E_\mathbf{j}\neq\emptyset$. Moreover, each edge in $E_{\mathbf{i}\mathbf{j}}$ must be traversed at least twice to guarantee a nonzero contribution in expectation. This leads to $\operatorname{Card}(E_{\mathbf{i}\mathbf{j}})\leq k-\ell$ and hence 
$$
\operatorname{Card}(V_{\mathbf{i}\mathbf{j}})=1-\chi_{\mathbf{i}\mathbf{j}}+\operatorname{Card}(E_{\mathbf{i}\mathbf{j}})\leq k-\ell-1.
$$
To sum up, we obtain 
$$
\operatorname{Var}(r_{k,1}^{\ell,n})\leq 4\theta^2 C_k M^{2k}n^{k-\ell-1}n^{\ell}=O(4\theta^2  M^{2k}n^{k-1})
$$
modulo universal constants depending only on $k$. This completes the proof.
\end{proof}

Now we are in the position to complete the proof of Theorem \ref{theorem1} in the special case $C_n=\theta E_{11}$.

\begin{proof}[\proofname\ of Theorem \ref{theorem1} for rank 1 diagonal noise] We should first check that the sequence of power series $q_n(z)$ are tight in $\operatorname{H}(\mathbb{D})$: this is already verified in \cite{bordenave2021convergence}, Lemma 3.1 when $C_n=0$, and it is easy to verify that this tightness persists when we add the noise matrix $C_n$.

Combining all the previous technical lemmas, in particular Lemma \ref{fugagqgq}, \ref{lemmas2.4!}, \ref{curious}, \ref{lemma2.6!}, we conclude that we have the following convergence as holomorphic functions in $\operatorname{H}(\mathbb{D})$:
\begin{equation}
    q_n(z)\overset{\text{law}}{\underset{n\to\infty}\longrightarrow}   (1-z\theta)\kappa(z)e^{-F(z)}
\end{equation}
where $\kappa(z)=\sqrt{1-z^2\mathbb{E}[a_{11}^2]}$ for $z\in\mathbb{D}$ and 
$$F(z)=\sum_{k=1}^\infty X_k\frac{z^k}{k} 
$$ where $\{X_k\}_{k\geq 1}$ are independent complex Gaussian variables with 
$$
\mathbb{E}[X_k]=0,\quad \mathbb{E}[|X_k|^2]=1,\quad \mathbb{E}[X_k^2]=\mathbb{E}[a_{11}^2]^k.
$$
We have also used 
$$
\exp\left(-\sum_{k=1}^\infty\frac{\theta^kz^k}{k}\right)=\exp\left(\log(1-z\theta)\right)=1-z\theta.
$$
By definition, $\kappa$ has no zeros in $\mathbb{D}$ since $|\mathbb{E}[a_{11}^2]|\leq 1.$
Since $F(z)$ converges almost surely for any $z\in\mathbb{D}$, we deduce that $e^{-F(z)}$ has no zeros in $\mathbb{D}$. That is, provided $|\theta|>1,$ the only zero of $(1-z\theta)\kappa(z)e^{-F(z)}$ in $\mathbb{D}$ is at $z=\frac{1}{\theta}$.

Since $q_n^\theta(z)$ converges almost surely to $q_n(z)$ in $\mathbb{H}(\mathbb{D})$, by Rouché's theorem, for $n$ large, almost surely there is only one solution $z_n\in\mathbb{D}$ such that $q_n(z_n)=0$ and that $\lim_{n\to\infty}z_n=\frac{1}{\theta}$. In other words, almost surely for $n$ sufficiently large, there is only one eigenvalue of $n^{-1/2}A_n+C_n$ with modulus larger than $1+o(1),$ and this eigenvalue converges to $\theta$ as $n$ tends to infinity. This proves Theorem \ref{theorem1} in this special case $C_n=\theta E_{11}$.
\end{proof}

\subsection{The general case of finitely many nonzero elements} Now we prove the general case of Theorem \ref{theorem1}, case (1) that covers all $C_n$ which has finite rank and is in a Jordan block form.

\begin{proof}[\proofname\ of Theorem \ref{theorem1},case 1] Consider again the trace expansion
\begin{equation}\begin{aligned}
\operatorname{Tr}((A_n+\sqrt{n}C_n)^k)&=\sum_{\substack{1\leq i_1,i_2,\cdots,i_k\leq n,\\|\{i_1,\cdots,i_k\}|=k}}(a_{i_1i_2}+\sqrt{n}C_{i_1i_2})(a_{i_2i_3}+\sqrt{n}C_{i_2i_3})\cdots (a_{i_ki_1}+\sqrt{n}C_{i_ki_1})
\\&+\sum_{\substack{1\leq i_1,i_2,\cdots,i_k\leq n,\\ |\{i_1,\cdots,i_k\}|\leq k-1}} (a_{i_1i_2}+\sqrt{n}C_{i_1i_2})(a_{i_2i_3}+\sqrt{n}C_{i_2i_3})\cdots (a_{i_ki_1}+\sqrt{n}C_{i_ki_1})\\
\end{aligned}    \end{equation}
We expand all the brackets on the right hand side, and denote by $T_1$ the terms consisting of monomials including both terms from $a_{ij}$ and $C_{ij}$ and that $|\{i_1,\cdots,i_k\}|=k$, and let $T_2$ consist of the monomials including both terms from $a_{ij}$ and $C_{ij}$ and that $|\{i_1,\cdots,i_k\}|\leq k-1$. We wish to prove that $\frac{T_1}{n^{k/2}}$ converges to $0$ in probability, and that $\frac{T_2}{n^{k/2}}$ converges to $0$ in probability.

To check the claim concerning $T_1$, we compute $\mathbb{E}[|T_1|^2]$. For two typical terms $w_{\mathbf{i}}=w_{i_1i_2}\cdots w_{i_ki_1}$ and $r_{\mathbf{j}}=r_{j_1j_2}\cdots r_{j_kj_1}$ (where each $w_{ij}$ and $r_{ij}$ are either some $a_{ij}$ or some $\sqrt{n}C_{ij}$), in order for $\mathbb{E}[w_\mathbf{i}r_\mathbf{j}]\neq 0$, we require that each edge traversed by some $a_{ij}$ must be traversed twice (those traversed more than twice has negligible contribution), and this forces there are exactly the same number of pairs in $w_\mathbf{i}$ and $r_\mathbf{j}$ that are some $a_{ij}$, and that they must correspond to the same $a_{ij}$ so as to appear at least twice. In this vein, assume that there are $Q$ edges in $w_\mathbf{i}$ and $r_\mathbf{j}$ traversed by some $A_n$. We see that to reconstruct this graph we actually need no more than $Q-1$ free vertices chosen from any of $1,2,\cdots,n$, as all other vertices are forced to be chosen from the subscript $i,j$ such that $C_{ij}\neq 0$, which are finite by definition. Altogether, there are $2(k-Q)$ other edges traversed by some $\sqrt{n}C_{ij}.$ Thus the overall contribution is $O(n^{Q-1}(\sqrt{n})^{2(k-Q)}),$ summing over these $\mathbf{i}\cup\mathbf{j}$. This readily implies $\frac{T_1}{n^{\frac{k}{2}}}\overset{\text{law}}{\underset{n\to\infty}\longrightarrow} 0.$

To check the claim concerning $T_2$, in order for a term $w_{i_1i_2}w_{i_2i_3}\cdots w_{i_ki_1}$ to have non-zero mean, each edge traversed by some $a_{ij}$ must be traversed twice, so the graph formed by $(i_1,\cdots,i_k)$ can be decomposed as (a) a double cycle containing all the terms from $A_{ij}$ and some other terms from $\sqrt{n}C_n$, and (b) some other cycles around it containing only elements from $\sqrt{n}C_n$. Let $R$ be the number of edges from $\sqrt{n}C_n$, then there are at most $\frac{k-R}{2}$ edges for the $A_{ij}$ terms (as each such edge should be traversed at least twice). This corresponds to $O(n^{\frac{k-R}{2}-1})$ choices of free vertices from $1,2,\cdots,n$ and a bounded number of choices from vertices $i,j$ such that $C_{i,j}\neq 0$. Here we have $O(n^{\frac{k-R}{2}-1})$ instead of $O(n^{\frac{k-R}{2}})$ choices because at least one vertices from the $a_{ij}$'s should come from the finitely many subscripts where $C_{ij}\neq 0$. The overall contribution to $\mathbb{E}[T_2]$ is thus $O(n^{\frac{k-R}{2}-1}n^{\frac{R}{2}})$ and we have that $\mathbb{E}[T_2]/n^\frac{k}{2}$ vanishes in the limit. 

Finally we check $\operatorname{Var}(T_2)=o(n^{k})$ as follows: the computation is exactly the same as before. For two typical terms $w_{\mathbf{i}}=w_{i_1i_2}\cdots w_{i_ki_1}$ and $r_{\mathbf{j}}=r_{j_1j_2}\cdots r_{j_kj_1}$ to have non-zero covariance, each edge traversed by some $a_{ij}$ should be traversed twice, and assuming there are $Q$ such edges, this leads to $Q-1$ free vertices and $2(k-Q)$ other edges traversed by $\sqrt{n}C_n$. This implies $\operatorname{Var}(T_2)=O(n^{k-1})$. Combining both steps we conclude that $\frac{T_2}{n^{k/2}}\overset{\text{law}}{\underset{n\to\infty}\longrightarrow}0$.

To sum up, using also the fact that $c_n(z)$ converges to $c(z)$, and checking the tightness of $q_n(z)$ just as in \cite{bordenave2021convergence}, Lemma 3.1, we have proved that 
\begin{equation}
    \lim_{n\to\infty }q_n(z)=\kappa(z) c(z)e^{-F(z)}\quad z\in\mathbb{D},
\end{equation} in the sense of convergence as functions in $\operatorname{H}(\mathbb{D})$, and the proof of Theorem \ref{theorem1}, case (1) is complete thanks to Rouche's theorem.
\end{proof}

\subsection{Infinitesimally shifting the mean} In this case we consider the matrix $C_n$ such that 
$$
C_{ij}=\frac{1}{n}\mu
$$ for each $i,j=1,\cdots,n$.
In this case we have
\begin{equation}\label{traceexpanecondjor}\begin{aligned}
\operatorname{Tr}((A_n+\sqrt{n}C_n)^k)&=\sum_{\substack{1\leq i_1,i_2,\cdots, i_k\leq n,\\|\{i_1,\cdots,i_k\}|=k}}(a_{i_1i_2}+\frac{\mu}{\sqrt{n}})(a_{i_2i_3}+\frac{\mu}{\sqrt{n}})\cdots (a_{i_ki_1}+\frac{\mu}{\sqrt{n}})\\&+\sum_{\substack{1\leq i_1,i_2,\cdots, i_k\leq n,\\ |\{i_1,\cdots,i_k\}|\leq k-1}} (a_{i_1i_2}+\frac{\mu}{\sqrt{n}})(a_{i_2i_3}+\frac{\mu}{\sqrt{n}})\cdots (a_{i_ki_1}+\frac{\mu}{\sqrt{n}})
.
\end{aligned}
\end{equation}
Expanding the product in equation \eqref{traceexpanecondjor}, a typical term in the expansion that is not in $\operatorname{Tr}(A_n)^k$ or $\operatorname{Tr}(\sqrt{n}C_n)^k$ is of the form $w_{\mathbf{i}}=w_{i_1i_2}\cdots w_{i_ki_1}$, where $r$ of the $w_{i_si_{s+1}}$ terms are some $\frac{\mu}{\sqrt{n}}$ and the other $k-r$ terms are some $a_{ij}$, for $r=1,2,\cdots,k-1$.

\begin{lemma}\label{fuckgreat}
    We have the following convergence in law:
\begin{equation}\label{eq2.14}
   \frac{\operatorname{Tr}((A_n+\sqrt{n}C_n)^k)-\operatorname{Tr}((A_n)^k)-\operatorname{Tr}((\sqrt{n}C_n)^k)}{n^{k/2}}\overset{\text{law}}{\underset{n\to\infty}\longrightarrow}0.
\end{equation}
\end{lemma}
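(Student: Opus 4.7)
The plan is to follow the mean--variance strategy used in Lemmas \ref{lemmas2.4!} and \ref{lemma2.6!}, adapted to the fact that here the $C$-factors $\mu/\sqrt{n}$ can sit on any directed edge (whereas in the rank-one diagonal case they were pinned at the self-loop at vertex $1$). First I apply Lemma \ref{greatlemma2.34} to reduce to $|a_{ij}|\leq M$ almost surely, and expand the left-hand side of \eqref{eq2.14} by multilinearity, indexing each mixed monomial by the subset $P\subset\{1,\dots,k\}$ of positions carrying a $C$-factor, with $|P|=r\in\{1,\dots,k-1\}$. This writes the quantity as a finite sum, over $r$ and $P$, of terms
\[
S_P \;=\; \Big(\frac{\mu}{\sqrt{n}}\Big)^{r}\,\sum_{(i_1,\dots,i_k)\in\{1,\dots,n\}^k}\,\prod_{s\notin P}a_{i_s i_{s+1}},
\]
and it suffices to show $\mathbb{E}[S_P]/n^{k/2}\to 0$ and $\operatorname{Var}(S_P)/n^{k}\to 0$ for each fixed $P$, then apply Chebyshev and a union bound over the $2^{k}$ patterns.

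The main work is the mean estimate. For a walk $\mathbf{i}=(i_1,\dots,i_k)$ to contribute to $\mathbb{E}[S_P]$, each distinct directed $a$-edge must appear at least twice among the $a$-positions, so the number of distinct $a$-edges obeys $|E_a|\leq(k-r)/2$; trivially $|E_C|\leq r$, and hence the closed walk has $e\leq(k+r)/2$ distinct directed edges. Since the walk is closed, its underlying directed subgraph is connected and in-degree-balanced (Eulerian), so $v:=|\{i_1,\dots,i_k\}|\leq e$, with equality precisely when the subgraph is a single directed cycle of length $v$ traversed $k/v$ times. The crucial point is that the extremal case $v=e=(k+r)/2$ would force $(k+r)\mid 2k$, equivalently $(k+r)\mid 2r$; but $k+r>2r$ whenever $r\leq k-1$, so this is impossible, and therefore $v<(k+r)/2$. (The case $k-r=1$, where a single $a_{ij}$ factor survives with mean zero, is trivial.) Plugging $v\leq\lfloor(k+r-1)/2\rfloor$ into $|\mathbb{E}[S_P]|\leq|\mu|^{r}n^{-r/2}M^{k-r}\cdot n^{v}$ yields $|\mathbb{E}[S_P]|=O(n^{(k-1)/2})$, so the normalized mean is $O(n^{-1/2})$.

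For the variance I would run the same counting on pairs of walks $(\mathbf{i},\mathbf{j})$, as in the proof of Lemma \ref{lemma2.6!}. Disjoint-support pairs contribute exactly $|\mathbb{E}[S_P]|^{2}$ and cancel, while for overlapping pairs the combined multiset of $2(k-r)$ $a$-positions must still pair up, giving $|E_a|_{\mathbf{ij}}\leq k-r$ and $|E_C|_{\mathbf{ij}}\leq 2r$. An analogous single-cycle exclusion (a shared length-$(k+r)$ cycle would require $(k+r)\mid k$, impossible for $r\geq 1$) forces $v_{\mathbf{ij}}\leq k+r-1$, leading to $\operatorname{Var}(S_P)=O(|\mu|^{2r}n^{-r}M^{2(k-r)}n^{k+r-1})=O(n^{k-1})$, so $\operatorname{Var}(S_P/n^{k/2})=O(1/n)\to 0$.

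The hard part is the sharp vertex bound $v<(k+r)/2$ in the mean estimate: a naive pairing argument only yields $v\leq(k+r)/2$, which exactly cancels the $(\mu/\sqrt{n})^{r}$ prefactor and leaves an $O(1)$ contribution. The extra factor of $n^{-1/2}$ needed comes entirely from the arithmetic obstruction $(k+r)\nmid 2r$ for $r\leq k-1$, which rules out the extremal single-directed-cycle configuration on $(k+r)/2$ vertices. Everything else---the truncation, the excess-based variance bookkeeping, and the final Chebyshev step---runs in parallel with the rank-one diagonal case of Section 2.1.
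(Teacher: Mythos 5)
Your argument is correct and lives in the same first-moment/second-moment, walk-counting framework as the paper's proof (truncate via Lemma \ref{greatlemma2.34}, show the mean and variance of the mixed terms vanish after dividing by $n^{k/2}$, conclude by Chebyshev), but the combinatorial core is organized differently. The paper gets the crucial extra decay by structurally decomposing each contributing closed walk into a doubled-cycle core containing all the $a$-edges plus attached cycles made purely of $C$-edges, then arguing that the attached part contributes strictly fewer new vertices than edges (and that $k_1>k_2$ in the degenerate case $k=2k_1$). You instead use the generic bound $v\le e\le |E_a|+|E_C|\le \lfloor (k-r)/2\rfloor+r$ — valid because every visited vertex has out-degree at least one in the distinct-edge digraph — and then exclude the extremal configuration $v=e=(k+r)/2$, which would force the walk onto a single directed cycle traversed uniformly, via the divisibility obstruction $(k+r)\nmid 2k$ for $1\le r\le k-1$ (equivalently: on a uniformly traversed cycle the $C$-edges would be hit once while the $a$-edges must be hit at least twice, which is incompatible). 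This sidesteps the paper's somewhat delicate decomposition claim at the cost of a slightly weaker rate ($O(n^{-1/2})$ for the normalized mean versus the paper's $O(n^{-1})$), which is still more than enough; the analogous exclusion in the variance step is also sound, since shared out-degree one on the union forces both walks onto one common cycle of length $k+r$, contradicting $(k+r)\mid k$. One small correction: the distinct-edge digraph of a closed walk need not be in-degree-balanced (e.g.\ the walk $1\to2\to3\to2\to3\to1$ gives vertex $2$ in-degree $2$ and out-degree $1$), so "Eulerian" is not the right justification for $v\le e$; but the out-degree count you actually use, together with the functional-graph characterization of the equality case, is exactly what is needed.
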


\begin{proof} We first show that the expectation of the left hand side of \eqref{eq2.14} converges to zero. Recall that a typical term in the numerator of the LHS of \eqref{eq2.14} is of the form $w_{\mathbf{i}}=w_{i_1i_2}\cdots w_{i_ki_1}$, then in order for $\mathbb{E}[w_\mathbf{i}]\neq 0$, we need that each edge in $w_\mathbf{i}$ that are traversed by some $a_{ij}$ must be traversed twice (those travelled three times or more are negligible). Thus we can decompose the cycle $(i_1,i_2,\cdots,i_{k},i_1)$ into (a) a double cycle consisting of all the $a_{ij}$'s and some $\frac{\mu}{\sqrt{n}}$s, and (b) some other cycles attached to this cycle that consist entirely of $\frac{\mu}{\sqrt{n}}$ terms. Suppose the double cycles in (a) consist of $k_1$ vertices (and hence $k_1$ edges), and among the $k_1$ edges, $k_2$ of them are traversed by some $a_{ij}$'s and the other by $\frac{\mu}{\sqrt{n}}$.  Then there are remaining $k-2k_1$ edges to be added afterwards. This would account for an addition of strictly less than $k-2k_1$ new vertices. Thus the overall contribution to the expectation of the left hand side of \eqref{eq2.14} is, in the case $k>2k_1$: 
$$
n^{-\frac{k}{2}}\times O(n^{k_1}n^{k-2k_1-1}n^{-\frac{k-2k_1}{2}}n^{-(k_1-k_2)})=O(n^{-1}),
$$
and in the case $k=2k_1$, one must have $k_1>k_2$, so that the contribution is
$$
n^{-\frac{k}{2}}\times O(n^{k_1}n^{-(k_1-k_2)})=O(n^{-1}).
$$
This proves the expectation of the left hand side of \eqref{eq2.14} converges to $0$.

We then show the variance of the left hand side of \eqref{eq2.14} converges to $0$ as well. For two typical terms $w_{\mathbf{i}}=w_{i_1i_2}\cdots w_{i_ki_1}$ and $r_{\mathbf{j}}=r_{j_1j_2}\cdots r_{j_kj_1}$ to have non-zero covariance, we consider the graph formed by $\mathbf{i}\cup\mathbf{j}$, whose vertices and edges are the union of those from $\mathbf{i}$ and $\mathbf{j}$. Then each edge traversed by some $a_{ij}$ in $\mathbf{i}\cup\mathbf{j}$ should be traversed twice (those travelled more times are negligible), and such that there are equal number of terms in $w_\mathbf{i},r_\mathbf{j}$ travelled by $a_{ij}$'s, and they have the same subscript. Therefore $\mathbf{i}\cup\mathbf{j}$ is made up of (a) a double cycle with $k_1$ edges (each travelled twice) and $k_1$ vertices, of which $k_2$ are travelled by the $a_{ij}$'s; and (b) $2k-2k_1$ outlying edges, giving rise to at most $2k-2k_1-1$ new vertices. Then the overall contribution to the variance of the LHS of \eqref{eq2.14} is at most 
$$
n^{-k}\times O(n^{k_1}n^{2k-2k_1-1}n^{-\frac{2k-2k_1}{2}}n^{-(k_1-k_2)})=O(n^{-1}),
$$ in the case when $k<k_1$,
and 
$$
n^{-k}\times O(n^kn^{-(k_1-k_2)})=O(n^{-1}),$$
in the case when $k=k_1$ so that $k_1>k_2$,
completing the proof.
\end{proof}

\begin{proof}[\proofname\ of Theorem \ref{theorem1}, case (2)]
Thanks to Lemma \ref{fuckgreat}, and proving tightness of $q_n(z)$ just as the previous cases, we conclude that 
\begin{equation}
   q_n(z)\overset{\text{law}}{\underset{n\to\infty}\longrightarrow}(1-\mu z)\kappa(z)e^{-F(z)},\quad z\in\mathbb{D},
\end{equation}
regarded as convergence of holomorphic functions in $\operatorname{H}(\mathbb{D})$, so that when $|\mu|>1$, almost surely as $n$ gets large, there is precisely one eigenvalue of $n^{-1/2}A_n+C_n$ with modulus larger than $1+o(1)$, and this eigenvalue converges to $\mu$ as $n$ tends to infinity. 

More generally, if we assume that $|C_{i,j}|\leq \frac{M}{n}$ for some fixed $M>0$ and each $i,j=1,\cdots,n$, then most previous arguments still apply, and Lemma \ref{fuckgreat} is still valid. Thanks to the assumption that $c_n(z)$, the reverse characteristic function of $C_n$, converges to $c(z)$ in $\operatorname{H}(\mathbb{D})$, we deduce that
\begin{equation}
   q_n(z)\overset{\text{law}}{\underset{n\to\infty}\longrightarrow}c(z)\kappa(z)e^{-F(z)},\quad z\in\mathbb{D},
\end{equation}
so that by Rouche's theorem, eigenvalues with modulus larger than $1+o(1)$ converge to those $z\in\mathbb{C}$ that solves $c(z^{-1})=0$ and $|z|>1$, and vice versa.
This proves Theorem \ref{theorem1} under the case (2) stated there.
\end{proof}

\section{Perturbation of sparse i.i.d. matrices}
Now we are in the setting to prove Theorem \ref{theorem1.2}. We recall the trace asymptotic of the random matrix $A_n$ from \cite{coste2023sparse}:

\begin{theorem}[\cite{coste2023sparse}, Theorem 2.2 and Theorem 3.1] 
\begin{enumerate} The random matrix $A_n$ in the definition of Theorem \ref{theorem1.2} satisfies the following trace asymptotic:
\item In case (1) of Theorem \ref{theorem1.2}(sparse with mean degree $d$), let $Y_\ell,\ell\in\mathbb{N}^*$ a family of independent Poisson random variables with $Y_\ell\sim \operatorname{Poi}(d^\ell/\ell)$, and define 
\begin{equation}
    X_k:=\sum_{\ell\mid k}\ell Y_\ell,\quad (k\in\mathbb{N}^*).
\end{equation} Then we have the joint weak convergence 
\begin{equation}
    (\operatorname{Tr}(A_n^1),\cdots.\operatorname{Tr}(A_n^k))\overset{\text{law}}{\underset{n\to\infty}\longrightarrow}      (X_1,\cdots,X_k).
\end{equation}

\item In case (2) of Theorem \ref{theorem1.2}(semi-sparse with mean degree $d_n\to\infty,d_n=n^{o(1)}$), let $(N_k:k\geq 1)$ be a family of independent real Gaussian random variables with mean $0$, variance $1$. Then for any $k$, we have the joint convergence in law
\begin{equation}
    \left(\frac{\operatorname{Tr}(A_n)}{\sqrt{d_n}}-\sqrt{d_n},\cdots, \frac{\operatorname{Tr}(A_n^k)}{\sqrt{d_n}^k}-\sqrt{d_n}^k  \right)
    \overset{\text{law}}{\underset{n\to\infty}\longrightarrow}      (N_1,\cdots,\sqrt{k}N_k)+(0,1,0,1,\cdots ,1_{k\text{ even }}).
\end{equation}

\end{enumerate}    
\end{theorem}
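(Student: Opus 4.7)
The plan is to establish both parts via the method of moments, expanding $\operatorname{Tr}(A_n^{k_1}) \cdots \operatorname{Tr}(A_n^{k_m})$ as a sum over $m$-tuples of closed walks on $\{1,\ldots,n\}$ of the prescribed lengths and classifying the contributions according to the combinatorial type of the union directed multigraph. For a single walk $(i_1,\ldots,i_k)$ with $v$ distinct vertices and $e$ distinct directed edges, the monomial $a_{i_1 i_2} \cdots a_{i_k i_1}$ has expectation $(d_n/n)^e$ since the $a_{ij}$ are Bernoulli$(d_n/n)$, and the number of walks of a fixed combinatorial type is $\Theta(n^v)$, so that type contributes at order $n^{v-e}\, d_n^e$. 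A closed walk must contain a cycle, forcing $v \leq e$; equality holds exactly when the walk traces a single cycle, possibly multiple times. The analogous analysis for products of $m$ walks singles out unions of vertex-disjoint cycles as the dominant configurations.

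For Part (1) with $d_n = d$ fixed, only the $v = e$ types survive the $n \to \infty$ limit. Let $N_\ell^{(n)}$ be the number of directed cycles of length $\ell$ in the Erdős-Rényi digraph. Then $\mathbb{E}[N_\ell^{(n)}] = \binom{n}{\ell}(\ell-1)!\,(d/n)^\ell \to d^\ell/\ell$, and a Chen-Stein argument (or a direct factorial-moment computation, using that two distinct cycles sharing a vertex contribute at strictly lower order) yields joint convergence $(N_\ell^{(n)})_\ell \Rightarrow (Y_\ell)_\ell$ to independent Poissons with parameters $d^\ell/\ell$. A directed cycle of length $\ell$ with $\ell \mid k$ accounts for exactly $\ell$ closed walks of length $k$ (one per choice of starting vertex), so the dominant-type identification gives $\operatorname{Tr}(A_n^k) = \sum_{\ell \mid k} \ell\, N_\ell^{(n)} + o_{\mathbb{P}}(1)$ and hence joint convergence to $X_k = \sum_{\ell \mid k} \ell\, Y_\ell$.

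For Part (2) with $d_n \to \infty$ and $d_n = n^{o(1)}$, the same cycle identification gives $\operatorname{Tr}(A_n^k) = \sum_{\ell \mid k} \ell\, N_\ell^{(n)} + \mathrm{error}$, but now $\mathbb{E}[N_\ell^{(n)}] \sim d_n^\ell/\ell$ diverges. A central limit theorem for sums of weakly dependent cycle indicators (via moments, Stein's method, or martingale differences adapted to a sequential edge-revelation filtration) yields $(N_\ell^{(n)} - d_n^\ell/\ell)/\sqrt{d_n^\ell/\ell} \Rightarrow N_\ell$, independent standard Gaussians across $\ell$. Dividing by $\sqrt{d_n}^k$, the term $\ell = k$ contributes $\sqrt{d_n}^k + \sqrt{k}\, N_k + o(1)$, the term $\ell = k/2$ (present only when $k$ is even) contributes $1 + o(1)$, and all other $\ell \mid k$ contribute $o(1)$, reproducing the stated limit after subtracting $\sqrt{d_n}^k$.

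The hard part will be uniform control of the error from non-dominant combinatorial types, especially when handling joint moments of $\operatorname{Tr}(A_n^{k_1}) \cdots \operatorname{Tr}(A_n^{k_m})$: the union of $m$ walks can have many repeated edges, and one must verify that any configuration other than $m$ vertex-disjoint cycles of the required lengths is of strictly lower order than $\sqrt{d_n}^{k_1+\cdots+k_m}$. In Part (2) this is what pins down the assumption $d_n = n^{o(1)}$: each additional repeated edge costs a factor $n^{-1}$ but gains at most a factor $d_n$ from the Bernoulli expectation, and with $d_n$ subpolynomial this net loss is always negligible. The same accounting drives the asymptotic independence of cycle counts of distinct lengths, hence the independence of the limiting $Y_\ell$ (resp.\ $N_\ell$) families.
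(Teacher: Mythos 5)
This statement is not proved in the paper at all: it is quoted verbatim from \cite{coste2023sparse} (Theorems 2.2 and 3.1) and used as an external input to the proof of Theorem \ref{theorem1.2}, so there is no in-paper argument to compare against. Your sketch is, in outline, the standard proof of that cited result and essentially the route taken in the reference: expand traces over closed walks, observe that for Bernoulli entries a walk with $v$ distinct vertices and $e$ distinct directed edges contributes at order $n^{v-e}d_n^{e}$, note that $v\le e$ with equality iff the walk winds around a single directed cycle (so $\operatorname{Tr}(A_n^k)=\sum_{\ell\mid k}\ell N_\ell^{(n)}+\text{negligible}$, self-loops included as $\ell=1$), and then prove joint Poisson limits for the cycle counts $N_\ell^{(n)}$ when $d$ is fixed and a joint CLT after centering when $d_n\to\infty$. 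Your bookkeeping of the limit in part (2) is right: only $\ell=k$ contributes fluctuations at scale $d_n^{k/2}$, giving $\sqrt{k}N_k$, while $\ell=k/2$ contributes the deterministic $1_{k\text{ even}}$ and smaller divisors vanish; and you correctly identify that $d_n=n^{o(1)}$ is what makes every configuration with $e>v$ (and every pair of overlapping cycles in the second-moment computation) lose a factor of at least $d_n^{O(1)}/n\to 0$. The only caveat is that what you present is a strategy rather than a complete proof: the factorial-moment (or Chen--Stein) computation establishing joint convergence and asymptotic independence of $(N_\ell^{(n)})_\ell$ across different lengths $\ell$, and the uniform error control for joint moments of several traces, are precisely the parts carried out in detail in \cite{coste2023sparse}; for the purposes of this paper one would simply cite that work, as the author does.
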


In this sparsity regime we prove the following simple yet crucial lemma:

\begin{lemma}\label{lemma438} Assume the deterministic matrix $C_n$ satisfies the assumptions made in Theorem \ref{theorem1.2}. Then,
    \begin{enumerate}
        \item In case (1) of Theorem \ref{theorem1.2}, for each $k$ we have the following convergence in law
        \begin{equation}\label{fucg}
            \operatorname{Tr}((A_n+C_n)^k)-\operatorname{Tr}(A_n^k)-\operatorname{Tr}(C_n^k)    \overset{\text{law}}{\underset{n\to\infty}\longrightarrow} 0.
        \end{equation}
        \item In case (2) of Theorem \ref{theorem1.2}, for each $k$ we have the following convergence in law
        \begin{equation}\label{fucg2}
            \operatorname{Tr}((A_n+\sqrt{d_n}C_n)^k)-\operatorname{Tr}(A_n^k)-\operatorname{Tr}((\sqrt{d_n}C_n)^k)    \overset{\text{law}}{\underset{n\to\infty}\longrightarrow} 0.\end{equation}
    \end{enumerate}
\end{lemma}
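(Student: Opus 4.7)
The plan is to prove $L^2$ convergence to zero on the left-hand side of each of \eqref{fucg} and \eqref{fucg2}; this yields convergence in probability and hence convergence in law. Writing $B_n = C_n$ in case (1) and $B_n = \sqrt{d_n}\, C_n$ in case (2), expand
\begin{equation*}
    \operatorname{Tr}\bigl((A_n + B_n)^k\bigr) = \sum_{i_1, \ldots, i_k = 1}^{n} \prod_{j=1}^{k} \bigl( a_{i_j i_{j+1}} + (B_n)_{i_j i_{j+1}} \bigr)
\end{equation*}
(with $i_{k+1} := i_1$) by distributing the product over the $2^k$ labelings $\tau: \{1, \ldots, k\} \to \{a, C\}$ of the edge positions. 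The pure-$a$ and pure-$C$ labelings produce $\operatorname{Tr}(A_n^k)$ and $\operatorname{Tr}(B_n^k)$; I denote by $T_n$ the sum of the remaining $2^k - 2$ mixed contributions and aim to show $\mathbb{E}[T_n] \to 0$ and $\operatorname{Var}(T_n) \to 0$.

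The key structural input is the sparsity of $C_n$: setting $S := \{i : C_{ij} \neq 0 \text{ or } C_{ji} \neq 0 \text{ for some } j\}$, one has $|S| \leq 2 M_2$, and both endpoints of any $C$-edge lie in $S$. For a mixed labeling with $r$ positions labeled $C$ ($1 \leq r \leq k-1$), these positions cut the cyclic walk into $r$ cyclically ordered sub-walks of $a$-edges of lengths $\ell_1, \ldots, \ell_r \geq 0$ with $\sum_i \ell_i = k - r \geq 1$, whose endpoints all lie in $S$. Because $\mathbb{E}[a_{ij}^m] = d_n/n$ (or $d/n$ in case (1)) for every $m \geq 1$, each sub-walk's $a$-contribution has expectation $(d_n/n)^{e}$ where $e$ is the number of distinct directed edges in that sub-walk. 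A standard connectivity count ($e \geq u - 1$ for an open connected walk on $u$ vertices, and $e \geq u$ for a closed one) gives
\begin{equation*}
    \sum_{\text{sub-walks of length }\ell\text{ with fixed endpoints in }S} \mathbb{E}\Bigl[\prod_{\text{edges}} a_{\cdot\cdot}\Bigr] \;=\; O\!\left( \frac{d_n^\ell}{n} \right)
\end{equation*}
for any non-trivial sub-walk ($\ell \geq 1$), with implicit constants depending on $k$ but not on $n$.

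Multiplying the contributions of the $r$ sub-walks, using that at least one $\ell_i \geq 1$ (so the product carries at least one factor $1/n$), and multiplying by the bounded factors $M_1^r$ from the $C$-entries, $|S|^{2r}$ from the $C$-edge choices, and $\sqrt{d_n}^{\, r}$ (only in case (2)), each mixed labeling contributes at most $O\bigl(\sqrt{d_n}^{\, r}\, d_n^{k-r}/n^s\bigr) = O(d_n^{k - r/2}/n^s)$ to $|\mathbb{E}[T_n]|$, where $s \geq 1$ counts the non-trivial sub-walks. In case (1) this is $O(1/n)$ since $d_n = d$. In case (2) it is $O(n^{o(1) - s}) \to 0$ because $d_n = n^{o(1)}$. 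Summing over the finitely many (i.e., $k$-dependent) labelings gives $\mathbb{E}[T_n] \to 0$. The variance is treated by the same bookkeeping applied to pairs of cyclic walks $(\mathbf{i}, \mathbf{j})$: the $a$-edges in $\mathbf{i} \cup \mathbf{j}$ again yield a factor $(d_n/n)^{\text{distinct }a\text{-edges}}$, and the excess/connectivity analysis used in Lemma \ref{lemma2.6!}, combined with the restriction of $C$-endpoints to $S$, gives $\operatorname{Var}(T_n) = o(1)$.

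The main obstacle is the careful sub-walk bookkeeping: distinguishing open from closed sub-walks (whose minimal edge counts differ by one and thus yield different powers of $n$), and verifying that at least one non-trivial sub-walk is always present so that the crucial factor $1/n$ is actually produced. In case (2) the hypothesis $d_n = n^{o(1)}$ is essential precisely so that the prefactor $\sqrt{d_n}^{\,r}\, d_n^{k-r}$ cannot overwhelm the $n^{-s}$ savings; without it the mixed contributions would be of order $1$ and would need to be identified as a non-trivial limit rather than shown to vanish.
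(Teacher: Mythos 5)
Your proposal is correct in substance and follows the same route as the paper: expand the trace, isolate the mixed monomials, and kill them by a moment/graph-counting argument in which the sparsity of $C_n$ pins the endpoints of every $a$-sub-walk to the bounded set $S$ while the Bernoulli mean $d_n/n$ converts each distinct $a$-edge into a factor $d_n/n$; the paper organizes the count by whether the $C$-edges do or do not form a cycle among themselves, whereas you organize it by the $r$ sub-walks between consecutive $C$-positions, but these are equivalent bookkeepings. Two remarks. First, your variance step is redundant: since the Bernoulli entries are nonnegative and $|C_{ij}|\leq M_1$, your expectation estimate in fact bounds $\mathbb{E}\bigl[\sum_{\mathbf i}|w_{\mathbf i}|\bigr]$, so $T_n\to 0$ in $L^1$ and hence in probability without any second-moment computation --- this is exactly why the paper's proof stops at the expectation. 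Second, and this is the one step you should tighten: ``multiplying the contributions of the $r$ sub-walks'' is not literally valid, because when two sub-walks share a directed $a$-edge the expectation does not factorize (for Bernoulli variables $\mathbb{E}[a_{ij}^2]=d_n/n\gg(d_n/n)^2$), so the product of per-sub-walk bounds is not an upper bound term by term. The correct global statement, which rescues your conclusion, is that for the union multigraph of all $a$-edges one always has
\begin{equation*}
\#\{\text{free vertices}\}\;\leq\;\#\{\text{distinct directed }a\text{-edges}\}-1,
\end{equation*}
proved component by component: a component with two distinct $S$-vertices loses two vertices against $e\geq u-1$; a component whose sub-walks all return to a single $S$-vertex contains a closed non-trivial walk and hence satisfies $e\geq u$ for distinct \emph{directed} edges; and at least one non-trivial sub-walk exists since $k-r\geq 1$. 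With that replacement the contribution of each labeling is $O\bigl(d_n^{3k/2}/n\bigr)$ as you claim, and the proof goes through.
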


\begin{proof}
    In both cases, expanding the trace of matrix power, the terms contributing to the left hand side of \eqref{fucg}, \eqref{fucg2} are of the form
    \begin{equation}\label{line452}
\sum_{\substack{1\leq i_1,\cdots,i_k\leq n,}} w_{i_1i_2}w_{i_2i_3}\cdots w_{i_ki_1},
    \end{equation}
where for some $s$, we have $w_{i_si_{s+1}}=a_{i_si_{s+1}}$ and for some $s'$ we have $w_{i_{s'}i_{s'+1}}=(\sqrt{d_n})C_{i_{s'},i_{s'+1}}$ (for case (1), we don't need the $\sqrt{d_n}$ term, but this is irrelevant to the forthcoming proofs). That is, both terms from $A_n$ and $C_n$ should be present in this product. The sequence $(i_1,i_2,\cdots,i_k,i_1)$ must form a cycle, and hence is a union of edge disjoint directed loops (including self loops). Suppose $t$ distinct indices are involved in the formation of this cycle, then there must be at least $t$ distinct edges involved in the cycle. Suppose that $m$ of these distinct edges are traversed by an element $(d_n)^{1/2}C_{ij}$, then since both vertices of such edges should come from indices where $C_{ij}\neq 0$ and there are only finitely many such vertices, we see that either (a) at least $m+1$ vertices are determined by these finitely many choices, or (b) these  edges traversed by $\sqrt{d_n}C_{j,k}$ should form a cycle. In case (a), we have only $O(n^{t-m-1})$ choices of free vertices. Meanwhile, for each $i,j$ we have $\mathbb{E}[a_{ij}]=\frac{d_n}{n}$, so the overall expectation of \eqref{line452} is $$O\left(n^{t-m-1}(\frac{d_n}{n})^{t-m}(d_n)^{m}\right)=O\left(\frac{(d_n)^{2k}}{n}\right).$$ Since by assumption $d_n=n^{o(1)}$, we see that contribution of such terms to the expectation of \eqref{line452} converges to zero. In case (b),we need to add additional cycles on the cycle already formed by the edges from $\sqrt{d_n}
C_{ij}$: we first add in those cycles involving edges from $\sqrt{d_n}C_{ij}$, and this case is exactly the same as in case (a), and what remains is cycles involving purely the $a_{ij}$ terms.   But we can see that an addition of $s$ vertices for such remaining cycles lead to at least $s+1$ more edges traversed by $a_{ij}$, and such terms contribute $O(\frac{(d_n)^{2k}}{n})$ to the expectation of \eqref{fucg},\eqref{fucg2} and is negligible as well. Combining both cases completes the proof.

\end{proof}

Now we can complete the proof of Theorem \ref{theorem1.2}.

\begin{proof}[\proofname\ of Theorem \ref{theorem1.2}]
We begin with case (1) of Theorem \ref{theorem1.2}.
    Denote by $$q_n(z)=\det(\operatorname{I}-z(A_n+C_n)).$$ Then modifying the arguments in \cite{coste2023sparse}, one can show that $\{q_n(z)\}_{n\geq 1}$ are tight as holomorphic functions on $\{z\in\mathbb{C}:|z|\leq\frac{1}{\sqrt{d}}\}$. By \cite{coste2023sparse}, Theorem 2.7 with a slight modification, combining the computations in Lemma \ref{lemma438}, we deduce that
    \begin{equation}
        q_n(z)\overset{\text{law}}{\underset{n\to\infty}{\longrightarrow}}e^{-\overline{f}(z)},
    \end{equation}
    where 
    $$
\overline{f}(z)=\sum_{k=1}^\infty (X_k+\operatorname{Tr}(A_n^k))\frac{z^k}{k},
    $$
    and this expression can be simplified as follows: denoting by
    $$
\mathscr{R}(z)=\sum_{k=1}^\infty (\tau_k-X_k)\frac{z^k}{k},\quad \tau_k=\mathbb{E}[X_k],
    $$
    we have 
    \begin{equation}
         q_n(z)\overset{\text{law}}{\underset{n\to\infty}{\longrightarrow}}e^{\mathscr{R}(z)}\times c(z)\times \prod_{\ell=1}^\infty (1-dz^\ell)^\frac{1}{\ell},
    \end{equation}
    and the convergence is in terms of convergence of holomorphic functions on $\{z\in\mathbb{C}:|z|<d^{-\frac{1}{2}}\}$. By \cite{coste2023sparse}, section 8, the factor $e^{\mathscr{R}(z)}$ is nonzero everywhere on $\{z\in\mathbb{C}:|z|<d^{-\frac{1}{2}}\}$, so by Rouche's theorem, almost surely for $n$ sufficiently large, the only eigenvalues of $A_n+C_n$ with modulus larger than $\sqrt{d}(1+o(1))$ converge either to $d$ or to those $z\in\mathbb{C}$ satisfying $c(z^{-1})=0$ and $|z|>\sqrt{d}$, and each root of the latter equation corresponds to the $n\to\infty$ limit of some outlier eigenvalues $\lambda_n$ of  $A_n+C_n$.

    Now we prove case (2) of Theorem \ref{theorem1.2}. For this it suffices to consider 
    $$
q_n(z)=\det(\operatorname{I}_n-z({d_n}^{-\frac{1}{2}}A_n+C_n)),
    $$
    Define 
    \begin{equation}
        G(z)=\exp\left\{\sum_{k=1}^\infty N_k\frac{z^k}{\sqrt{k}}\right\},
    \end{equation} then by modifying the proof of \cite{coste2023sparse}, Lemma 10.1, we can show $\frac{q_n(z)}{\sqrt{d_n}}$ is tight in $\operatorname{H}(\mathbb{D})$. Then modifying the proof of \cite{coste2023sparse}, Theorem 3.2, we have the convergence
    \begin{equation}
\frac{q_n(z)}{\sqrt{d_n}}\overset{\text{law}}{\underset{n\to\infty}{\longrightarrow}} -zc(z)\sqrt{1-z^2}G(z).
    \end{equation}
    in terms of convergence as functions on $\operatorname{H}(\mathbb{D})$. Therefore by Rouche's theorem, almost surely for $n$ sufficiently large, the eigenvalues of $(d_n)^{-1/2}A_n+C_n$ having modulus larger than $1+o(1)$ either converge to infinity (with multiplicity one), or converge to roots of $c(z^{-1})=0$ that satisfies $|z|>1$. Conversely, all the roots to $c(z^{-1})=0$ with $|z|>1$ arise as the $n\to\infty$ limit of some outlier eigenvalues $\lambda_n$ of $(d_n)^{-1/2}A_n+C_n$. This completes the proof.
\end{proof}

\section{Perturbation for the product of i.i.d. matrices}

This section covers the proof of Theorem \ref{productoftheorem1}. 
Recalling the definition of $\mathcal{Y}$ in \eqref{welinearizeit}, we define the reverse characteristic function via 
$$
q_n(z)=\det(1-z(n^{-1/2}\mathcal{Y}+\mathcal{C})),\quad z\in\mathbb{C}.
$$

\begin{proof}[\proofname\ of Theorem \ref{productoftheorem1}] We use
$\mathcal{Y}_{ij},\mathcal{C}_{ij}:1\leq i,j\leq mn$ to denote the entries of $\mathcal{Y}$ and $\mathcal{C}$.
Consider again the trace expansion
\begin{equation}\begin{aligned}
\operatorname{Tr}((\mathcal{Y}+\sqrt{n}\mathcal{C})^k)&=\sum_{\substack{1\leq i_1,i_2,\cdots,i_k\leq nm,\\|\{i_1,\cdots,i_k\}|=k}}(\mathcal{Y}_{i_1i_2}+\sqrt{n}\mathcal{C}_{i_1i_2})(\mathcal{Y}_{i_2i_3}+\sqrt{n}\mathcal{C}_{i_2i_3})\cdots (\mathcal{Y}_{i_ki_1}+\sqrt{n}\mathcal{C}_{i_ki_1})
\\&+\sum_{\substack{1\leq i_1,i_2,\cdots,i_k\leq nm,\\ |\{i_1,\cdots,i_k\}|\leq k-1}} (\mathcal{Y}_{i_1i_2}+\sqrt{n}\mathcal{C}_{i_1i_2})(\mathcal{Y}_{i_2i_3}+\sqrt{n}\mathcal{C}_{i_2i_3})\cdots (\mathcal{Y}_{i_ki_1}+\sqrt{n}\mathcal{C}_{i_ki_1}),\\
\end{aligned}
\end{equation}
we will follow the same lines of reasoning as in the proof of Theorem \ref{theorem1}, with the only difference here being that many entries $\mathcal{Y}_{i_si_{s+1}}$ are zero. As we are only showing the error converges to 0, this discrepancy is not important.

More specifically, we expand all the brackets on the right hand side, and denote by $T_1$ the terms consisting of monomials including both terms from $\mathcal{Y}_{ij}$ and $\mathcal{C}_{ij}$ and that $|\{i_1,\cdots,i_k\}|=k$, and let $T_2$ consist of the monomials including both terms from $\mathcal{Y}_{ij}$ and $\mathcal{C}_{ij}$ and that $|\{i_1,\cdots,i_k\}|\leq k-1$. We wish to prove that $\frac{T_1}{n^{k/2}}$ converges to $0$ in probability, and that $\frac{T_2}{n^{k/2}}$ converges to $0$ in probability.

To check the claim concerning $T_1$, we compute $\mathbb{E}[|T_1|^2]$. For two typical terms $w_{\mathbf{i}}=w_{i_1i_2}\cdots w_{i_ki_1}$ and $r_{\mathbf{j}}=r_{j_1j_2}\cdots r_{j_kj_1}$ (where each $w_{ij}$ and $r_{ij}$ are either some $\mathcal{Y}_{ij}$ or some $\sqrt{n}\mathcal{C}_{ij}$), in order for $\mathbb{E}[w_\mathbf{i}r_\mathbf{j}]\neq 0$, we require that each edge traversed by some (not identically zero) $\mathcal{Y}_{ij}$ must be traversed twice (those traversed more than twice has negligible contribution), and this forces there are exactly the same number of pairs in $w_\mathbf{i}$ and $r_\mathbf{j}$ that are some $\mathcal{Y}_{ij}$, and that they must correspond to the same $\mathcal{Y}_{ij}$ so as to appear at least twice. In this vein, assume that there are $Q$ edges in $w_\mathbf{i}$ and $r_\mathbf{j}$ traversed by some entries $\mathcal{Y}_{ij}$. We see that to reconstruct this graph we actually need no more than $Q-1$ free vertices chosen from any of $1,2,\cdots,nm$, as all other vertices are forced to be chosen from the subscript $i,j$ such that $\mathcal{C}_{ij}\neq 0$, which are finite by definition. Altogether, there are $2(k-Q)$ other edges traversed by some $\sqrt{n}\mathcal{C}_{ij}.$ Thus the overall contribution is $O(n^{Q-1}(\sqrt{n})^{2(k-Q)}),$ summing over these $\mathbf{i}\cup\mathbf{j}$. This readily implies $\frac{T_1}{n^{\frac{k}{2}}}\overset{\text{law}}{\underset{n\to\infty}\longrightarrow} 0.$

To check the claim concerning $T_2$, in order for a term $w_{i_1i_2}w_{i_2i_3}\cdots w_{i_ki_1}$ to have non-zero mean, each edge traversed by some (not identically zero) $\mathcal{Y}_{ij}$ must be traversed twice, so the graph formed by $(i_1,\cdots,i_k)$ can be decomposed as (a) a double cycle containing all the terms from $\mathcal{Y}_{ij}$ and some other terms from $\sqrt{n}\mathcal{C}_n$, and (b) some other cycles around it containing only elements from $\sqrt{n}\mathcal{C}_n$. Let $R$ be the number of edges from $\sqrt{n}\mathcal{C}_n$, then there are at most $\frac{k-R}{2}$ edges for the $\mathcal{Y}_{ij}$ terms (as each such edge should be traversed at least twice). This corresponds to $O(n^{\frac{k-R}{2}-1})$ choices of free vertices from $1,2,\cdots,nm$ and a bounded number of choices from vertices $i,j$ such that $\mathcal{C}_{i,j}\neq 0$. Here we have $O(n^{\frac{k-R}{2}-1})$ instead of $O(n^{\frac{k-R}{2}})$ choices because at least one vertices from the $\mathcal{Y}_{ij}$'s should come from the finitely many subscripts where $\mathcal{C}_{ij}\neq 0$. The overall contribution to $\mathbb{E}[T_2]$ is thus $O(n^{\frac{k-R}{2}-1}n^{\frac{R}{2}})$ and we have that $\mathbb{E}[T_2]/n^\frac{k}{2}$ vanishes in the limit. 

Finally we check $\operatorname{Var}(T_2)=o(n^{k})$ as follows: the computation is exactly the same as before. For two typical terms $w_{\mathbf{i}}=w_{i_1i_2}\cdots w_{i_ki_1}$ and $r_{\mathbf{j}}=r_{j_1j_2}\cdots r_{j_kj_1}$ to have non-zero covariance, each edge traversed by some $\mathcal{Y}_{ij}$ should be traversed twice, and assuming there are $Q$ such edges, this leads to $Q-1$ free vertices and $2(k-Q)$ other edges traversed by entries of $\sqrt{n}\mathcal{C}_n$. This implies $\operatorname{Var}(T_2)=O(n^{k-1})$. Combining both steps we conclude that $\frac{T_2}{n^{k/2}}\overset{\text{law}}{\underset{n\to\infty}\longrightarrow}0$.

To sum up, using also the fact that $c_n(z)$ converges to $c(z)$, and checking the tightness of $q_n(z)$ just as in Lemma \ref{tightnesslemma}, we have proved that 
\begin{equation}
    \lim_{n\to\infty }q_n(z)=\kappa_\eqref{productm2s}(z) c(z)e^{-F_{\ref{productm}}(z)}\quad z\in\mathbb{D},
\end{equation} in the sense of convergence as functions in $\operatorname{H}(\mathbb{D})$. This determines the limit of the outlying eigenvalues of $n^{-1/2}\mathcal{Y}+\mathcal{C}$. By a similar argument to Lemma \eqref{theorem2.1345}, we see that the outlying eigenvalues of $\prod_{i=1}^m(n^{-1/2}A^i+C^i)$ are the outlying eigenvalues of $n^{-1/2}\mathcal{Y}+\mathcal{C}$ raised to the $m$-th power, and thus the proof of Theorem \ref{productoftheorem1} is complete thanks to Rouche's theorem.
\end{proof}

\section*{Statements and declarations}
The author has no financial or non-financial conflicting of interests to declare that are relevant to the contents of this article.

No dataset is generated in this research.

\bibliographystyle{amsplain}

\end{document}